\def\shorttitle{Strong Maximum Principle for Fractional Diffusion Equations}
\def\shortauthor{Y. Liu}
\newfont{\myfnt}{cmssi10 scaled 1440}
\numberwithin{equation}{section}
\def\ps@nk{\def\@oddhead{\vbox{\hbox to \hsize{\pic \footnotesize \it \shorttitle
\hfill \rm \thepage} \vspace{1mm} \vspace*{-2mm}}}
\def\@evenhead{\vbox{\hbox to \hsize{\pic \footnotesize \rm \thepage \hfill \it \shortauthor}
\vspace{1mm} \vspace*{-2mm}}}
\def\@oddfoot{} \def\@evenfoot{}}
\def\ps@first{\def\@oddhead{\vbox{\hbox to \hsize{\pic \footnotesize
} \break}}
\def\@oddfoot{} \def\@evenfoot{}}
\newtheoremstyle{thmstyle}
  {6pt}
  {6pt}
  {\it}
  {}
  {\bf}
  {}
  {.5em}
  {}
\newtheoremstyle{remstyle}
  {6pt}
  {6pt}
  {\rm}
  {}
  {\bf}
  {}
  {.5em}
  {}
\def\Section#1{\Sec{\large #1} \setcounter{equation}{0} \vskip -6mm \indent}
\def\Sec{\@Startsection{section}{1}{\z@}
                                   {-3.5ex \@plus -1ex \@minus -.2ex}%
                                   {2.3ex \@plus.2ex}%
                                   {\normalfont\large\bfseries\boldmath}}
\def\@Startsection#1#2#3#4#5#6{%
  \if@noskipsec \leavevmode \fi
  \par
  \@tempskipa #4\relax
  \@afterindenttrue
  \ifdim \@tempskipa <\z@
    \@tempskipa -\@tempskipa \@afterindentfalse
  \fi
  \if@nobreak
    \everypar{}%
  \else
    \addpenalty\@secpenalty\addvspace\@tempskipa
  \fi
  \@ifstar
    {\@ssect{#3}{#4}{#5}{#6}}%
    {\@dblarg{\@Sect{#1}{#2}{#3}{#4}{#5}{#6}}}}
\def\@Sect#1#2#3#4#5#6[#7]#8{%
  \ifnum #2>\c@secnumdepth
    \let\@svsec\@empty
  \else
    \refstepcounter{#1}%
    \protected@edef\@svsec{\@seccntformat{#1}\relax}%
  \fi
  \@tempskipa #5\relax
  \ifdim \@tempskipa>\z@
    \begingroup
      #6{%
          \@hangfrom{\hskip #3\relax\@svsec \hskip -2.5mm}%
          \interlinepenalty \@M #8\@@par}
    \endgroup
    \csname #1mark\endcsname{#7}%
    \addcontentsline{toc}{#1}{%
      \ifnum #2>\c@secnumdepth \else
        \protect\numberline{\csname the#1\endcsname}%
      \fi
      #7}%
  \else
    \def\@svsechd{%
      #6{\hskip #3\relax
      \@svsec #8}%
      \csname #1mark\endcsname{#7}%
      \addcontentsline{toc}{#1}{%
        \ifnum #2>\c@secnumdepth \else
          \protect\numberline{\csname the#1\endcsname}%
        \fi
        #7}}%
  \fi
  \@xsect{#5}}
\renewenvironment{abstract}{%
        \small
        \quotation
         \noindent {\bfseries \abstractname } }%
      {\if@twocolumn\else\endquotation\fi}
\def\Subsec{\@StartSubsection{subsection}{2}{\z@}%
                                     {-3.25ex\@plus -1ex \@minus -.2ex}%
                                     {1.5ex \@plus .2ex}%
                                     {\normalfont\normalsize\bfseries\boldmath}}
\def\@StartSubsection#1#2#3#4#5#6{%
  \if@noskipsec \leavevmode \fi
  \par
  \@tempskipa #4\relax
  \@afterindenttrue
  \ifdim \@tempskipa <\z@
    \@tempskipa -\@tempskipa \@afterindentfalse
  \fi
  \if@nobreak
    \everypar{}%
  \else
    \addpenalty\@secpenalty\addvspace\@tempskipa
  \fi
  \@ifstar
    {\@ssect{#3}{#4}{#5}{#6}}%
    {\@dblarg{\@SubSect{#1}{#2}{#3}{#4}{#5}{#6}}}}
\def\@SubSect#1#2#3#4#5#6[#7]#8{%
  \ifnum #2>\c@secnumdepth
    \let\@svsec\@empty
  \else
    \refstepcounter{#1}%
    \protected@edef\@svsec{\@seccntformat{#1}\relax}%
  \fi
  \@tempskipa #5\relax
  \ifdim \@tempskipa>\z@
    \begingroup
      #6{%
          \@hangfrom{\hskip #3\relax\@svsec\hskip -1.5mm}%
          \interlinepenalty \@M #8\@@par}
    \endgroup
    \csname #1mark\endcsname{#7}%
    \addcontentsline{toc}{#1}{%
      \ifnum #2>\c@secnumdepth \else
        \protect\numberline{\csname the#1\endcsname}%
      \fi
      #7}%
  \else
    \def\@svsechd{%
      #6{\hskip #3\relax
      \@svsec #8}%
      \csname #1mark\endcsname{#7}%
      \addcontentsline{toc}{#1}{%
        \ifnum #2>\c@secnumdepth \else
          \protect\numberline{\csname the#1\endcsname}%
        \fi
        #7}}%
  \fi
  \@xsect{#5}}
\def\list#1#2{\ifnum \@listdepth >5\relax \@toodeep \else \global
\advance \@listdepth\@ne \fi \rightmargin \z@ \listparindent\z@
\itemindent\z@ \csname @list\romannumeral\the\@listdepth\endcsname
\def\@itemlabel{#1}\let\makelabel\@mklab \@nmbrlistfalse #2\relax
\@trivlist \parskip 0pt \parindent\listparindent \advance \linewidth
-\rightmargin \advance\linewidth -\leftmargin \advance\@totalleftmargin
\leftmargin \parshape \@ne \@totalleftmargin \linewidth \ignorespaces}
\renewcommand{\@makecaption}[2]{\begin{center}#1. #2\end{center}}
\theoremstyle{thmstyle}
\newtheorem{thm}{\indent Theorem}[section]
\newtheorem{lem}[thm]{\indent Lemma}
\newtheorem{coro}[thm]{\indent Corollary}
\theoremstyle{remstyle}
\newsavebox{\mygraphic}
\def\pic{\begin{picture}(0,0) \put(-210,-1250){\usebox{\mygraphic}} \end{picture}}
\newfont{\HUGEbf}{cmbx10 scaled 3500}
\definecolor{gray}{rgb}{0.9,0.9,0.9}
\def\thebibliography#1{\section*{\bf \large References}
\list{[\arabic{enumi}]} {\settowidth \labelwidth{[#1]} \leftmargin
\labelwidth \advance \leftmargin \labelsep \usecounter{enumi}}
\def\newblock{\hskip .11em plus .33em minus .07em} \footnotesize \sloppy \clubpenalty
4000 \widowpenalty 4000 \sfcode`\.=1000 \relax}
\def\BC{\mathbb C}
\def\BN{\mathbb N}
\def\BR{\mathbb R}
\def\cA{\mathcal A}
\def\cD{\mathcal D}
\def\cE{\mathcal E}
\def\rd{\mathrm d}
\def\supp{\mathrm{supp}}
\def\Ga{\Gamma}
\def\Om{\Omega}
\def\al{\alpha}
\def\be{\beta}
\def\ga{\gamma}
\def\de{\delta}
\def\ve{\varepsilon}
\def\la{\lambda}
\def\vp{\varphi}
\def\om{\omega}
\def\f{\frac}
\def\ov{\overline}
\def\pa{\partial}
\def\wt{\widetilde}
\theoremstyle{definition}
\numberwithin{equation}{section}
\title{\Large\bf\boldmath Strong Maximum Principle for Multi-Term Time-\\
Fractional Diffusion Equations and its Application\\
to an Inverse Source Problem$^*$}
\author{\large Yikan LIU$^\dag$}
\date{}
\begin{document}

\maketitle

\thispagestyle{first}
\renewcommand{\thefootnote}{\fnsymbol{footnote}}

\footnotetext{\hspace*{-5mm} \begin{tabular}{@{}r@{}p{14cm}@{}} &
Manuscript last updated: \today.\\
$^\dag$ & Graduate School of Mathematical Sciences, The University of Tokyo, 3-8-1 Komaba, Meguro-ku, Tokyo 153-8914, Japan. E-mail: ykliu@ms.u-tokyo.ac.jp\\
$^*$ & This work is partly supported by a Grant-in-Aid for Scientific Research (S) 15H05740 and the A3 Foresight Program ``Modeling and Computation of Applied Inverse Problems'', Japan Society of the Promotion of Science.
\end{tabular}}

\renewcommand{\thefootnote}{\arabic{footnote}}

\begin{abstract}
In this paper, we establish a strong maximum principle for fractional diffusion equations with multiple Caputo derivatives in time, and investigate a related inverse problem of practical importance. Exploiting the solution properties and the involved multinomial Mittag-Leffler functions, we improve the weak maximum principle for the multi-term time-fractional diffusion equation to a stronger one, which is parallel to that for its single-term counterpart as expected. As a direct application, we prove the uniqueness for determining the temporal component of the source term with the help of the fractional Duhamel's principle for the multi-term case.

\vskip 4.5mm

\noindent\begin{tabular}{@{}l@{ }p{10cm}} {\bf Keywords } & Fractional diffusion equation, Strong maximum principle,\\
& Multinomial Mittag-Leffler function, Inverse source problem
\end{tabular}

\vskip 4.5mm

\noindent{\bf AMS Subject Classifications } 35R11, 26A33, 35B50, 35R30

\end{abstract}

\baselineskip 14pt

\setlength{\parindent}{1.5em}

\setcounter{section}{0}

\Section{Introduction and main results}\label{sec-intro}

Let $T>0$ and $\Om\subset\BR^d$ ($d=1,2,3$) be an open bounded domain with a smooth boundary (for example, of $C^\infty$ class). Fix a positive integer $m$ and let $\al_j,q_j$ ($j=1,\ldots,m$) be positive constants such that $1>\al_1>\cdots>\al_m>0$ and $q_1=1$ without loss of generality. Consider the following initial-boundary value problem for a time-fractional diffusion equation
\begin{equation}\label{eq-ibvp-u}
\left\{\!\begin{alignedat}{2}
& \sum_{j=1}^mq_j\pa_t^{\al_j}u(x,t)+\cA u(x,t)=F(x,t) & \quad & (x\in\Om,\ 0<t\le T),\\
& u(x,0)=a(x) & \quad & (x\in\Om),\\
& u(x,t)=0 & \quad & (x\in\pa\Om,\ 0<t\le T),
\end{alignedat}\right.
\end{equation}
where $\pa_t^{\al_j}$ denotes the Caputo derivative defined by
\[\pa_t^{\al_j}f(t):=\f1{\Ga(1-\al_j)}\int_0^t\f{f'(s)}{(t-s)^{\al_j}}\,\rd s,\]
and $\Ga(\,\cdot\,)$ denotes the Gamma function. Here $\cA$ is an elliptic operator defined for $f\in\cD(\cA):=H^2(\Om)\cap H^1_0(\Om)$ as
\begin{equation}\label{eq-def-A}
\cA f(x)=-\sum_{i,j=1}^d\pa_j(a_{ij}(x)\pa_if(x))+c(x)f(x)\quad(x\in\Om),
\end{equation}
where $a_{ij}=a_{ji}\in C^1(\ov\Om)$ ($1\le i,j\le d$) and $c\in C(\ov\Om)$. Moreover, it is assumed that $c\ge0$ in $\ov\Om$ and there exists a constant $\de>0$ such that
\[\sum_{i,j=1}^da_{ij}(x)\xi_i\xi_j\ge\de\sum_{i=1}^d\xi_i^2\quad(\forall\,x\in\ov\Om\,,\ \forall\,(\xi_1,\ldots,\xi_d)\in\BR^d).\]
The assumptions on the initial value $a$ and the source term $F$ will be specified later. For later convenience, we abbreviate $\bm\al:=(\al_1,\ldots,\al_m)$ and $\bm q:=(q_1,\ldots,q_m)$.

The governing equation in problem \eqref{eq-ibvp-u} is called a single-term time-fractional diffusion equation for $m=1$ and is called a multi-term one for $m\ge2$. Especially, the single-term case of \eqref{eq-ibvp-u} has been utilized extensively as a model for the anomalous diffusion phenomena in heterogeneous media (see e.g. \cite{AG92,HH98,MK00} and the references therein). As a natural generalization, the multi-term case is expected to improve the modeling accuracy for the anomalous diffusion, which has also drawn increasing attentions of mathematicians. In Luchko \cite{L10} and \cite{L11}, explicit solutions to \eqref{eq-ibvp-u} were given for $m=1$ and $m\ge2$ respectively. Sakamoto and Yamamoto \cite{SY11} established the fundamental well-posedness theory for \eqref{eq-ibvp-u} with $m=1$, which was parallelly extended to the case of $m\ge2$ in Li, Liu and Yamamoto \cite{LLY15}. Regarding numerical treatments, we refer to \cite{LZATB07,JLZ13} for the single-term case and \cite{JLLZ15} for the multi-term case. Meanwhile, although mainly restricted to the single-term case, \eqref{eq-ibvp-u} has also gained population among the inverse problem researchers in the last few years; recent literatures include \cite{JR15,LY15,LRYZ13}. Comparing the solutions of fractional diffusion equations with that of classical parabolic equations (i.e., $m=\al=1$ in \eqref{eq-ibvp-u}) especially on the regularity and asymptotic behavior, one can easily conclude from the existing works that they differ considerably from each other in the senses of the smoothing effect in space and the decay rate in time.

In contrast with the above mentioned aspects, it reveals that fractional diffusion equations possess certain similarity to their integer prototypes on the maximum principle. In retrospect, Luchko \cite{L09} established a weak maximum principle for \eqref{eq-ibvp-u} with $m=1$, which was generalized to the case of $m\ge2$ in \cite{L11}. As further extensions, a similar weak maximum principle for the multi-term time-space Riesz-Caputo fractional differential equations was proved in \cite{YLAT14}, and Al-Refai and Luchko \cite{AL15} obtained a strong maximum principle for multi-term time-fractional diffusion equations with Riemann-Liouville derivatives. Very recently, Liu, Rundell and Yamamoto \cite{LRY15} proved a strong maximum principle for the single-term case of \eqref{eq-ibvp-u} and shown the uniqueness for a related inverse source problem as a direct application.

As a generalization of \cite{LRY15}, this paper aims at the establishment of parallel results for the fractional diffusion equations with multiple Caputo derivatives in time. Taking advantage of the weak maximum principle obtained in \cite{L11} (see Lemma \ref{lem-wmp}), first we can validate the following strong maximum principle for the initial-boundary value problem \eqref{eq-ibvp-u}.

\begin{thm}\label{thm-smp}
Let $a\in L^2(\Om)$ satisfy $a\ge0$ and $a\not\equiv0$, $F\in L^\infty(0,\infty;L^2(\Om))$, and $u$ be the solution to \eqref{eq-ibvp-u}.

{\rm(a)} If $F=0$, then for any $x\in\Om$, the set $\cE_x:=\{t>0;\,u(x,t)\le0\}$ is a finite set. Further, we have $u>0$ a.e.\! in $\Om\times(0,\infty)$.

{\rm(b)} If $F\ge0$, then $u>0$ a.e.\! in $\Om\times(0,\infty)$.
\end{thm}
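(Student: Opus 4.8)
The plan is to reduce everything to the homogeneous case (a) and obtain (b) by superposition. Throughout I would use the eigensystem $\{(\la_n,\vp_n)\}$ of $\cA$ on $\cD(\cA)=H^2(\Om)\cap H^1_0(\Om)$ together with the representation $u(x,t)=\sum_n(a,\vp_n)\,G_n(t)\,\vp_n(x)$, where each temporal coefficient $G_n$ is the multinomial Mittag-Leffler function solving $\sum_{j=1}^m q_j\pa_t^{\al_j}G_n+\la_nG_n=0$ with $G_n(0)=1$. First, since $a\ge0$ and $F=0$, Lemma \ref{lem-wmp} gives $u\ge0$ on $\Om\times(0,\infty)$, so in case (a) the set $\cE_x$ reduces to the zero set $\{t>0:u(x,t)=0\}$; the whole issue is to show it cannot accumulate.

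Second, I would establish that for each fixed $x\in\Om$ the map $t\mapsto u(x,t)$ is real-analytic on $(0,\infty)$. The idea is to differentiate the series termwise in $t$ and control it using decay and smoothing estimates for $G_n$, the Weyl bound on $\la_n$, and the embedding $\cD(\cA)\hookrightarrow C(\ov\Om)$ valid for $d\le3$, so that point evaluation at $x$ is a bounded functional on the space in which the series converges analytically. Third, I would extract the large-time profile: from the asymptotics of the multinomial Mittag-Leffler functions, $G_n(t)=\f{t^{-\al_m}}{q_m\Ga(1-\al_m)\la_n}+o(t^{-\al_m})$, one gets $u(x,t)\sim\f{t^{-\al_m}}{q_m\Ga(1-\al_m)}(\cA^{-1}a)(x)$ as $t\to\infty$. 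Because $a\ge0$, $a\not\equiv0$, the function $w=\cA^{-1}a$ solves $\cA w=a$ with $w|_{\pa\Om}=0$, so the strong maximum principle for the elliptic operator $\cA$ (recall $c\ge0$) yields $w>0$ in $\Om$. Hence for each $x\in\Om$ there is $T_x$ with $u(x,t)>0$ for $t\ge T_x$; in particular $u(x,\cdot)\not\equiv0$, so by analyticity its zeros are isolated and confined to the bounded interval $(0,T_x]$.

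It remains to exclude accumulation of zeros at $t=0$, which I expect to be the main obstacle. My approach is an extremum argument: suppose $t_k\to0$ with $u(x_0,t_k)=0$. Since $u(\cdot,t_k)\ge0$ attains an interior minimum at $x_0$, one has $\cA u(x_0,t_k)\le0$, whence the equation forces $\sum_jq_j\pa_t^{\al_j}u(x_0,t_k)\ge0$; on the other hand $u(x_0,\cdot)\ge0$ attains a minimum over $[0,t_k]$ at $t_k$, so the extremum principle for the Caputo derivatives gives $\pa_t^{\al_j}u(x_0,t_k)\le0$ for every $j$. Together these force each $\pa_t^{\al_j}u(x_0,t_k)=0$, and the equality case of the extremum principle then yields $u(x_0,\cdot)\equiv0$ on $[0,t_k]$; analyticity propagates this to $(0,\infty)$, contradicting the positivity for large $t$. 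The delicate point, where I would spend the most care, is that for $a\in L^2(\Om)$ the behavior of $u(x_0,\cdot)$ as $t\to0$ and the requisite absolute continuity in $t$ must first be extracted from the regularity theory before the extremum principle applies; this, together with the sharp analytic and asymptotic estimates for the multinomial (rather than classical) Mittag-Leffler functions, is the genuinely new difficulty compared with the single-term case. Once $\cE_x$ is finite for every $x$, Fubini gives $u>0$ a.e. in $\Om\times(0,\infty)$.

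Finally, for (b) I would argue by linearity: decompose $u=v+w$, where $v$ solves \eqref{eq-ibvp-u} with the given $a$ and $F=0$, and $w$ solves it with zero initial data and source $F\ge0$. Part (a) gives $v>0$ a.e., while Lemma \ref{lem-wmp} gives $w\ge0$; adding, $u\ge v>0$ a.e., as claimed.
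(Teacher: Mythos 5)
Most of your outline coincides with the paper's actual proof: reduction to the zero set via Lemma \ref{lem-wmp}, exclusion of positive accumulation points by time-analyticity (Lemma \ref{lem-analy}), large-time strict positivity from the asymptotics \eqref{eq-asymp} plus the elliptic strong maximum principle applied to $\cA^{-1}a$ (Lemma \ref{lem-asymp}), the Fubini step, and the superposition argument for (b) (your asymptotic profile misplaces $q_m$ in the denominator rather than the numerator, but since $q_m>0$ this is harmless). The genuine gap is in the one step that is actually new and hard: excluding zeros of $u(x_0,\cdot)$ accumulating at $t=0$. Note first a structural red flag: your extremum argument never uses the assumption that the zeros accumulate at $0$ --- a \emph{single} zero $t_k>0$ would already give $u(x_0,\cdot)\equiv0$ on $[0,t_k]$ and hence, by analyticity, on $(0,\infty)$. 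That is, your argument, if valid, proves $\cE_x=\emptyset$ for every $x\in\Om$, which the paper explicitly states ``still remains open due to technical difficulties.''

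The argument fails on regularity, at both ends, and this is not a repairable technicality but the obstruction itself. Spatially: the inequality $\cA u(x_0,t_k)\le0$ at an interior minimum requires $u(\cdot,t_k)$ to be classically twice differentiable at $x_0$ and the PDE to hold \emph{pointwise} at $(x_0,t_k)$; under the standing hypotheses ($a_{ij}\in C^1(\ov\Om)$, $c\in C(\ov\Om)$, $a\in L^2(\Om)$) one only has $u(\cdot,t)\in H^2(\Om)$ and the equation as an identity in $L^2(\Om)$, i.e.\ for a.e.\ $x$ with a null set that may depend on $t$, so neither condition can be guaranteed at the particular point $x_0$. Temporally: the Caputo extremum principle, and especially its equality case (which is what you need to conclude $u(x_0,\cdot)\equiv0$ on $[0,t_k]$), rests on the integration-by-parts identity and thus needs $u(x_0,\cdot)\in C[0,t_k]$ with an integrable derivative up to $t=0$; for $a\in L^2(\Om)$ the solution is only $C([0,T];L^2(\Om))$ near $t=0$, so $u(x_0,\cdot)$ need not even be continuous at $t=0$ (pointwise values of $a$ exist only a.e.), and $\pa_tu$ carries the usual singularity $\sim t^{\al_1-1}$. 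The paper circumvents exactly this by a completely different mechanism: from $u(x_0,t_i)=0$ and the nonnegativity of the Green function (Lemma \ref{lem-Green}) it deduces that $G(x_0,\cdot,t_i)$ vanishes on $\om=\{a>0\}$, tests with arbitrary $\psi\in C_0^\infty(\om)$ to obtain $\cA^\ell\psi(x_0)=0$ for all $\ell\ge0$, and then runs an induction on the remainders of the multinomial Mittag-Leffler series (Lemma \ref{lem-remainder}) to force $v_\psi(x_0,\cdot)\equiv0$ on $[0,T]$ for every $T$, contradicting Lemma \ref{lem-asymp} for a nonnegative, nonvanishing $\psi_0$. Any repair of your plan would have to replace the pointwise extremum step with an argument of this duality/series type that never evaluates the equation at a single point.
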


The above conclusion is in principle consistent with its single-term counterpart (see \cite[Theorem 1.1]{LRY15}). Meanwhile, in view of the established weak maximum principle, Theorem \ref{thm-smp}(b) reduces to an immediate corollary to Theorem \ref{thm-smp}(a). On the other hand, it follows from the Sobolev embedding and Lemma \ref{lem-lly}(a) that $u$ allows a pointwise definition in Theorem \ref{thm-smp}(a) and thus the set $\cE_x$ is well-defined. Parallelly to the single-term case, $\cE_x$ is actually the set of zero points of $u(x,t)$ as a function of $t>0$, which contains at most finite elements according to Theorem \ref{thm-smp}(a). Unfortunately, the desired strict positivity, i.e., $\cE_x=\emptyset$ ($\forall\,x\in\Om$), still remains open due to technical difficulties, although the following result can be easily demonstrated.

\begin{coro}\label{coro-smp}
Let $a\in L^2(\Om)$ satisfy $a>0$ a.e.\! in $\Om$, $F\in L^\infty(0,\infty;L^2(\Om))$ be non-negative, and $u$ be the solution to \eqref{eq-ibvp-u}. Then $u>0$ a.e.\! in $\Om\times(0,\infty)$.
\end{coro}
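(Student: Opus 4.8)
The plan is to reduce the corollary to Theorem \ref{thm-smp}(a) by exploiting the linearity of problem \eqref{eq-ibvp-u} together with the weak maximum principle. By the well-posedness theory of \cite{LLY15}, I would split the solution as $u=v+w$, where $v$ solves \eqref{eq-ibvp-u} with the prescribed initial value $a$ but vanishing source term, and $w$ solves \eqref{eq-ibvp-u} with vanishing initial value but the prescribed source $F$. Both subproblems inherit the homogeneous Dirichlet condition on $\pa\Om$, so this superposition is legitimate and it suffices to control $v$ and $w$ separately.

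First I would treat $v$. Since $a>0$ a.e.\ in $\Om$ in particular forces $a\ge0$ and $a\not\equiv0$, the hypotheses of Theorem \ref{thm-smp}(a) are met; hence $v>0$ a.e.\ in $\Om\times(0,\infty)$, and in fact the slice $\{t>0;\,v(x,t)\le0\}$ is finite for each $x\in\Om$. Next I would treat $w$, whose initial value vanishes and whose source satisfies $F\ge0$. Here the weak maximum principle (Lemma \ref{lem-wmp}) applies directly and yields $w\ge0$ in $\Om\times(0,\infty)$, using that $c\ge0$ in $\ov\Om$ and that the lateral boundary data vanish. Combining the two estimates gives $u=v+w\ge v>0$ a.e.\ in $\Om\times(0,\infty)$: off the null set on which $v\le0$ one has $u\ge v>0$, which is exactly the assertion.

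I do not expect a genuine obstacle here, which is precisely why the corollary can be ``easily demonstrated''. The one point meriting a line of care is the passage from the \emph{pointwise-in-$x$} finiteness of the zero set of $v$ supplied by Theorem \ref{thm-smp}(a) to the \emph{a.e.}\ positivity of $u$ on the product $\Om\times(0,\infty)$; this follows from Fubini's theorem, since a finite subset of $(0,\infty)$ has one-dimensional Lebesgue measure zero, so the exceptional set $\{(x,t);\,v(x,t)\le0\}$ is null in $\Om\times(0,\infty)$. Alternatively, because $a>0$ a.e.\ trivially implies $a\ge0$ and $a\not\equiv0$, one may invoke Theorem \ref{thm-smp}(b) verbatim; the decomposition above merely makes transparent how the initial and source contributions are controlled by Theorem \ref{thm-smp}(a) and Lemma \ref{lem-wmp} respectively.
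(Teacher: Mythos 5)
Your argument is correct for the corollary as literally stated, but it is not the paper's proof, and the difference matters. You reduce everything to Theorem \ref{thm-smp}: either through the splitting $u=v_a+w$ with Theorem \ref{thm-smp}(a) handling $v_a$ and Lemma \ref{lem-wmp} handling $w$, or, as you note at the end, simply by observing that $a>0$ a.e.\ implies $a\ge0$ and $a\not\equiv0$, so Theorem \ref{thm-smp}(b) applies verbatim. That is logically airtight (and your worry about Fubini is superfluous, since the a.e.\ positivity is already part of the statement of Theorem \ref{thm-smp}(a)). The paper, after the same reduction to the homogeneous problem \eqref{eq-ibvp-v}, instead argues by contradiction at an arbitrary \emph{single point}: if $v_a(x_0,t_0)=0$, then by the representation \eqref{eq-rep-G}, the non-negativity of the Green function (Lemma \ref{lem-Green}) and the strict positivity of $a$, one gets $G(x_0,\,\cdot\,,t_0)=0$ in $\Om$; expanding in the eigenbasis forces $\left(1-\la_nt_0^{\al_1}E_{\bm\al',1+\al_1}^{(n)}(t_0)\right)\vp_n(x_0)=0$ for every $n$, which fails already at $n=1$ because $\vp_1>0$ in $\Om$ (see Evans \cite{E10}) and $1-\la_1t^{\al_1}E_{\bm\al',1+\al_1}^{(1)}(t)$ is completely monotone, hence strictly positive, by Bazhlekova \cite{B13}. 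The payoff of this longer route is a strictly stronger conclusion: under $a>0$ a.e., the homogeneous solution satisfies $v_a(x,t)>0$ for \emph{every} $(x,t)\in\Om\times(0,\infty)$, i.e., the zero set $\cE_x$ is empty. That is precisely the ``strict positivity'' which the paper emphasizes remains open under the weaker hypotheses of Theorem \ref{thm-smp}(a), and it is what makes the corollary worth stating separately. Your route, inheriting only the finite-but-possibly-nonempty zero sets from Theorem \ref{thm-smp}(a), reproduces the a.e.\ formulation but cannot recover this pointwise statement.
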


Similarly to \cite{LRY15}, we can investigate a related inverse source problem and give a uniqueness result by applying Theorem \ref{thm-smp}.

\begin{thm}\label{thm-ISP}
Fix $x_0\in\Om$ and $T>0$ arbitrarily and let $u$ be the solution to \eqref{eq-ibvp-u} with $a=0$ and $F(x,t)=\rho(t)\,g(x)$. Assume that $\rho\in C^1[0,T]$, $g\in\cD(\cA^\ve)$ with some $\ve>0\ ($see Section $\ref{sec-pre}$ for the definition of $\cD(\cA^\ve))$, $g\ge0$ and $g\not\equiv0$. Then $u(x_0,t)=0\ (0\le t\le T)$ implies $\rho(t)=0\ (0\le t\le T)$.
\end{thm}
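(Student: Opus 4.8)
The plan is to combine the multi-term fractional Duhamel's principle with the strong maximum principle (Theorem \ref{thm-smp}(a)) and the Titchmarsh convolution theorem. Let $v$ denote the solution of the homogeneous problem associated with \eqref{eq-ibvp-u} in which the spatial profile $g$ is taken as the initial value, i.e.
\begin{equation*}
\sum_{j=1}^mq_j\pa_t^{\al_j}v+\cA v=0\ (x\in\Om,\ t>0),\quad v(\,\cdot\,,0)=g,\quad v|_{\pa\Om}=0.
\end{equation*}
Because $g\ge0$, $g\not\equiv0$ and $g\in\cD(\cA^\ve)$, Theorem \ref{thm-smp}(a) together with the Sobolev embedding (which renders $v(x_0,\,\cdot\,)$ pointwise well-defined) shows that $v(x_0,t)>0$ for every $t>0$ outside a finite set; in particular $v(x_0,\,\cdot\,)$ is positive a.e.\ on $(0,\infty)$ and does not vanish a.e.\ on any right neighbourhood of the origin.

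To set up the principle I would take the Laplace transform in time of \eqref{eq-ibvp-u} with $a=0$. Writing $\Phi(p):=\sum_{j=1}^mq_jp^{\al_j}$ and using $v(\,\cdot\,,0)=g$, one gets $\wh u(\,\cdot\,,p)=\wh\rho(p)\,(\Phi(p)+\cA)^{-1}g$ and $(\Phi(p)+\cA)^{-1}g=(p/\Phi(p))\,\wh v(\,\cdot\,,p)$; hence, defining the causal function $\psi$ through $\wh\psi(p):=(p/\Phi(p))\,\wh\rho(p)$, we arrive at the Duhamel representation
\begin{equation}\label{eq-duhamel-plan}
u(x,t)=\int_0^t\psi(t-s)\,v(x,s)\,\rd s\qquad(x\in\Om,\ t>0).
\end{equation}
Since $p/\Phi(p)\sim p^{1-\al_1}$ as $p\to\infty$ and $\rho\in C^1[0,T]$, the function $\psi$ behaves like a fractional derivative of order $1-\al_1$ of a $C^1$ function; its only singularity at $t=0$ is of the integrable type $t^{\al_1-1}$, so $\psi\in L^1(0,T)$. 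I would justify \eqref{eq-duhamel-plan} rigorously either by performing this computation on a $C^1$ extension of $\rho$ to $[0,\infty)$---admissible because every operator involved is causal and $t$ ranges over $[0,T]$---or by verifying directly that the right-hand side solves \eqref{eq-ibvp-u} and appealing to the well-posedness of \cite{LLY15}.

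Inserting the data is then immediate: evaluating \eqref{eq-duhamel-plan} at $x=x_0$ and using $u(x_0,t)=0$ on $[0,T]$ gives $(\psi*v(x_0,\,\cdot\,))(t)=0$ for $0\le t\le T$. As $\psi,v(x_0,\,\cdot\,)\in L^1(0,T)$, the Titchmarsh convolution theorem supplies $\mu,\nu\ge0$ with $\mu+\nu\ge T$ such that $\psi=0$ a.e.\ on $(0,\mu)$ and $v(x_0,\,\cdot\,)=0$ a.e.\ on $(0,\nu)$. The strict positivity of $v(x_0,\,\cdot\,)$ established in the first step forces $\nu=0$, whence $\mu\ge T$ and $\psi=0$ a.e.\ on $(0,T)$.

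It remains to invert the symbol. From $\wh\rho(p)=(\Phi(p)/p)\,\wh\psi(p)=\sum_{j=1}^mq_jp^{\al_j-1}\wh\psi(p)$ with $\al_j-1\in(-1,0)$, one recovers the causal identity
\begin{equation*}
\rho(t)=\sum_{j=1}^m\f{q_j}{\Ga(1-\al_j)}\int_0^t\f{\psi(s)}{(t-s)^{\al_j}}\,\rd s\qquad(0\le t\le T),
\end{equation*}
a sum of Riemann--Liouville fractional integrals of $\psi$. Each is causal, so $\psi=0$ a.e.\ on $(0,T)$ yields $\rho=0$ on $[0,T)$, and $\rho\equiv0$ on $[0,T]$ by continuity. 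The main obstacle---and the sole genuinely new difficulty compared with the single-term argument of \cite{LRY15}---is the multinomial symbol $p/\Phi(p)$: for $m=1$ it equals the single power $p^{1-\al_1}$, so $\psi$ is a plain fractional derivative and both \eqref{eq-duhamel-plan} and its inversion are transparent, whereas here one must (i) establish \eqref{eq-duhamel-plan} and the membership $\psi\in L^1(0,T)$ for a non-power symbol and (ii) confirm that the above integro-differential relation is a valid causal inversion transferring the vanishing of $\psi$ to $\rho$. The use of Theorem \ref{thm-smp}(a) to guarantee the positive convolution kernel $v(x_0,\,\cdot\,)$---the structural core of the argument---is by contrast direct.
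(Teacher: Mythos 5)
Your plan follows the same route as the paper's own proof: a fractional Duhamel representation $u(\,\cdot\,,t)=\int_0^t\mu(t-s)\,v_g(\,\cdot\,,s)\,\rd s$, positivity of $v_g(x_0,\,\cdot\,)$ from Theorem \ref{thm-smp}(a), the Titchmarsh convolution theorem, and a causal inversion back to $\rho$. In fact your $\psi$ \emph{is} the paper's kernel $\mu$: the symbol relation $\wh\psi(p)=(p/\Phi(p))\,\wh\rho(p)$ is exactly the Laplace form of the paper's defining identity $\sum_{j=1}^mq_jJ^{1-\al_j}\mu=\rho$ in \eqref{eq-def-mu}, and your final ``inversion'' formula is literally that identity read backwards. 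Consequently your obstacle (ii) is not an obstacle at all: once $\psi=0$ a.e.\ on $(0,T)$, the relation $\rho=\sum_jq_jJ^{1-\al_j}\psi$ gives $\rho=0$ at once (the paper dispatches this with Young's inequality, $\|\rho\|_{L^1(0,T)}\le C\|\mu\|_{L^1(0,T)}=0$).

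The genuine gap is your obstacle (i), which you flag but do not close, and it is precisely where the paper does its technical work. The asymptotics $p/\Phi(p)\sim p^{1-\al_1}$ as $p\to\infty$ do not by themselves prove that $(p/\Phi(p))\,\wh\rho(p)$ is the Laplace transform of a function in $L^1(0,T)$, let alone one with singularity $O(t^{\al_1-1})$; passing from transform asymptotics to local time-domain behaviour is Tauberian-type reasoning that must be justified, and for the multi-term symbol it is not a one-line fix. The paper's substitute is Lemma \ref{lem-RL} (quoted from Podlubny): differentiate the defining relation into the multi-term Riemann--Liouville initial value problem $\sum_jq_jD_t^{\al_j}\mu=\rho'$ with $J^{1-\al_1}\mu(0)=\rho(0)$ and $J^{1-\al_j}\mu(0)=0$ $(j\ge2)$, use its $L^1$ well-posedness for existence, and compare with the zero-initial-data solution (Lemma \ref{lem-RL}(c)) to obtain $|\mu(t)|\le C\,t^{\al_1-1}$. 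Moreover, your fallback justification of the Duhamel formula---verify that the convolution solves \eqref{eq-ibvp-w} and invoke uniqueness---is indeed the paper's route (Lemma \ref{lem-Duhamel}), but it requires more than $\psi\in L^1$: to give meaning to $\pa_t^{\al_j}$ of the convolution one needs the pointwise bound on $\mu$ together with $\|\pa_tv_g(\,\cdot\,,t)\|_{L^2(\Om)}\le C\|g\|_{\cD(\cA^\ve)}\,t^{\al_1\ve-1}$, and this is exactly where the hypothesis $g\in\cD(\cA^\ve)$ with $\ve>0$ enters. Your sketch never uses that hypothesis, which is a symptom that the analytic verification of the representation is still missing.
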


The above theorem gives an affirmative answer to the inverse problem on the reconstruction of the temporal component $\rho$ in the inhomogeneous term $F(x,t)=\rho(t)\,g(x)$ in \eqref{eq-ibvp-u} by the single point observation. As was explained in \cite{LRY15}, such an inverse source problem simulates the situation of determining the time evolution pattern of the anomalous diffusion caused by the space-dependent source $g$ of contaminants. On the same problem, a double-sided stability estimates was already obtained in \cite{SY11} under the assumption that $x_0\in\supp\,g$. However, from a practical viewpoint, it is preferable that the monitoring point $x_0$ locates far away from the support of $g$. In this sense, Theorem \ref{thm-ISP} seems more realistic because the choice of $x_0$ is arbitrary.

Nevertheless, we emphasize that in the case of $x_0\notin\supp\,g$, the non-negative and non-vanishing assumptions of $g$ in Theorem \ref{thm-ISP} is essential for the uniqueness. Similarly to that constructed in \cite{LRY15}, we can also take advantage of the explicit solution to give a simple counterexample for the multi-term case where the data on $\{x_0\}\times[0,T]$ does not guarantee the uniqueness. Here we omit the details.

The rest of this paper is organized as follows. In Section \ref{sec-pre} we recall the necessary ingredients concerning problem \eqref{eq-ibvp-u} from existing works and show a key lemma for proving the main results. Sections \ref{sec-proof-smp} is devoted to the proofs of Theorem \ref{thm-smp} and Corollary \ref{coro-smp}, and the proof of Theorem \ref{thm-ISP} is given in Section \ref{sec-proof-ISP}.

\Section{Preliminaries}\label{sec-pre}

We start from introducing the general notations for self-completeness. Let $L^2(\Om)$ be the usual $L^2$-space equipped with the inner product $(\,\cdot\,,\,\cdot\,)$ and $H^1_0(\Om)$, $H^2(\Om)$ denote the Sobolev spaces. Let $\{(\la_n,\vp_n)\}_{n=1}^\infty$ be the eigensystem of the symmetric uniformly elliptic operator $\cA$ in \eqref{eq-ibvp-u} such that $0<\la_1<\la_2\le\cdots$, $\la_n\to\infty$ as $n\to\infty$ and $\{\vp_n\}\subset H^2(\Om)\cap H^1_0(\Om)$ forms a complete orthonormal basis of $L^2(\Om)$. Then we can define the fractional power $\cA^\ga$ for $\ga\ge0$ as
\[\cD(\cA^\ga)=\left\{f\in L^2(\Om);\,\sum_{n=1}^\infty|\la_n^\ga\,(f,\vp_n)|^2<\infty\right\},\quad\cA^\ga f:=\sum_{n=1}^\infty\la_n^\ga\,(f,\vp_n)\,\vp_n,\]
and $\cD(\cA^\ga)$ is a Hilbert space with the norm
\[\|f\|_{\cD(\cA^\ga)}=\left(\sum_{n=1}^\infty|\la_n^\ga\,(f,\vp_n)|^2\right)^{1/2}.\]

For $1\le p\le\infty$, $0<T\le\infty$ and a Banach space $X$, we say that $f\in L^p(0,T;X)$ provided
\[\|f\|_{L^p(0,T;X)}:=\left\{\!\begin{alignedat}{2}
& \left(\int_0^T\|f(\,\cdot\,,t)\|_X^p\,\rd t\right)^{1/p} & \quad & \mbox{if }1\le p<\infty\\
& \mathop{\mathrm{ess}\sup}_{0<t<T}\|f(\,\cdot\,,t)\|_X & \quad & \mbox{if }p=\infty
\end{alignedat}\right\}<\infty.\]
Similarly, for $0\le t_0<T<\infty$, we say that $f\in C([t_0,T];X)$ provided
\[\|f\|_{C([t_0,T];X)}:=\max_{t_0\le t\le T}\|f(\,\cdot\,,t)\|_X<\infty.\]
In addition, we define
\[C((0,T];X):=\bigcap_{0<t_0<T}C([t_0,T];X),\quad C([0,\infty);X):=\bigcap_{T>0}C([0,T];X).\]

To represent the explicit solution of \eqref{eq-ibvp-u}, we first recall the multinomial Mittag-Leffler function defined as follows. For $\be_0>0$, $\bm\be=(\be_1,\ldots,\be_m)\in(0,1)^m$ and $\bm z=(z_1,\ldots,z_m)\in\BC^m$, define (see Luchko and Gorenflo \cite{LG99})
\begin{equation}\label{eq-def-ML}
E_{\bm\be,\be_0}(\bm z):=\sum_{k=0}^\infty\sum_{k_1+\cdots+k_m=k}\f{(k;k_1,\ldots,k_m)\prod_{j=1}^mz_j^{k_j}}{\Ga(\be_0+\bm k\cdot\bm\be)},
\end{equation}
where $\bm k:=(k_1,\ldots,k_m)$, $\bm k\cdot\bm\be:=\sum_{j=1}^mk_j\be_j$ and $(k;k_1,\ldots,k_m)$ denotes the multinomial coefficient
\[(k;k_1,\ldots,k_m):=\f{k!}{k_1!\cdots k_m!}\quad\mbox{with }k=\sum_{j=1}^mk_j,\ k_j\ge0\ (1\le j\le m).\]
Obviously, the above definition is a natural generalization of the traditional one with $m=1$ (see, e.g., Podlubny \cite{P99}). For later use, we state several important properties of the multinomial Mittag-Leffler function.

\begin{lem}\label{lem-ML}
{\rm(a)} Let $\be_0>0$, $\bm\be\in(0,1)^m$ and $\bm z\in\BC^m$ be fixed. Then
\[E_{\bm\be,\be_0}(\bm z)=\f1{\Ga(\be_0)}+\sum_{j=1}^mz_j\,E_{\bm\be,\be_0+\be_j}(\bm z).\]

{\rm(b)} Let $\be_0>0$ and $1>\al_1>\cdots>\al_m>0$ be given. Let $\bm z\in\BC^m$ satisfy $\mu\le|\arg\,z_1|\le\pi$ and $-K\le z_j<0\ (j=2,\ldots,m)$ for some fixed $\mu\in(\al_1\pi/2,\al_1\pi)$ and $K>0$. Then there exists a constant $C>0$ depending only on $\mu$, $K$, $\al_j\ (j=1,\ldots,m)$ and $\be_0$ such that
\[E_{(\al_1,\al_1-\al_2,\ldots,\al_1-\al_m),\be_0}(\bm z)\le\f C{1+|z_1|}.\]
\end{lem}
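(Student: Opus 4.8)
The plan is to treat the two parts separately, with part~(a) a purely combinatorial rearrangement of the defining series and part~(b) resting on a Hankel-type integral representation. For part~(a), I would start from the series \eqref{eq-def-ML}, peel off the single term with $\bm k=\bm0$, which contributes $1/\Ga(\be_0)$, and then match the remaining terms against the right-hand side. Expanding each $z_j\,E_{\bm\be,\be_0+\be_j}(\bm z)$ by \eqref{eq-def-ML} and absorbing the prefactor $z_j$ into the product by the index shift $k_j\mapsto k_j+1$, the denominator $\Ga(\be_0+\be_j+\bm l\cdot\bm\be)$ turns into exactly $\Ga(\be_0+\bm k\cdot\bm\be)$ for the shifted multi-index $\bm k$. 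Summing over $j$, everything reduces to the multinomial Pascal identity
\[\sum_{j:\,k_j\ge1}(|\bm k|-1;k_1,\ldots,k_j-1,\ldots,k_m)=(|\bm k|;k_1,\ldots,k_m),\]
which follows at once from $\sum_jk_j=|\bm k|$ after writing each summand as $(|\bm k|-1)!\,k_j/(k_1!\cdots k_m!)$. Since $E_{\bm\be,\be_0}$ is entire in $\bm z$, all rearrangements are legitimate, so this settles (a).

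For part~(b), the first step is to convert the series into a contour integral. Using the Hankel-loop formula $1/\Ga(w)=\f1{2\pi i}\int_{Ha}\e^\ze\,\ze^{-w}\,\rd\ze$ together with the multinomial theorem $\sum_{|\bm k|=k}(k;\bm k)\prod_jw_j^{k_j}=(\sum_jw_j)^k$ to sum the resulting geometric series $\sum_k(\sum_jz_j\ze^{-\be_j})^k$, I would derive the representation
\[E_{(\al_1,\al_1-\al_2,\ldots,\al_1-\al_m),\be_0}(\bm z)=\f1{2\pi i}\int_{Ha}\f{\e^\ze\,\ze^{\al_1-\be_0}}{\ze^{\al_1}-z_1-\sum_{j=2}^mz_j\,\ze^{\al_j}}\,\rd\ze,\]
where $Ha$ is a Hankel contour opened at an angle $\te\in(\pi/2,\pi)$; the interchange of sum and integral is first justified on a loop of large radius, where $|\sum_jz_j\ze^{-\be_j}|<1$, and then extended by deforming the contour through the region where the denominator is nonzero. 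The second step is to estimate this integral. Choosing $\te$ with $\pi/2<\te<\mu/\al_1$ — possible precisely because $\mu>\al_1\pi/2$, and consistent with $\te<\pi$ since $\mu<\al_1\pi$ — the argument of $\ze^{\al_1}$ on the rays has modulus $\al_1\te<\mu\le|\arg z_1|$, so $\ze^{\al_1}$ and $z_1$ are separated in angle and one gets $|\ze^{\al_1}-z_1|\ge c(|\ze|^{\al_1}+|z_1|)$. The remaining terms carry the smaller exponents $\al_j<\al_1$ and are bounded in $z_j$ by $K$, hence dominated by $\ze^{\al_1}$ for large $|\ze|$ and harmless for small $|\ze|$; this upgrades the bound to $|\ze^{\al_1}-z_1-\sum_{j\ge2}z_j\ze^{\al_j}|\ge c'(|\ze|^{\al_1}+|z_1|)$ along the whole contour. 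Bounding the denominator below by $c'|z_1|$ and using $|\e^\ze|=\e^{|\ze|\cos\te}$ with $\cos\te<0$ to ensure absolute convergence of the ray integrals then yields $|E|\le C/|z_1|$ for $|z_1|\ge1$, while the trivial bound $|E|\le C$ covers $|z_1|\le1$; together these give the asserted $C/(1+|z_1|)$ with $C$ depending only on $\mu,K,\al_j,\be_0$.

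The main obstacle is the estimate in part~(b), and more precisely the uniform lower bound on the denominator together with the validity of the contour deformation: one must verify that $\ze^{\al_1}-z_1-\sum_{j\ge2}z_j\ze^{\al_j}$ has no zeros in the sector swept out during the deformation, so that the representation and the ray estimates hold with a constant independent of $z_1$. The boundedness $-K\le z_j<0$ of the subdominant arguments and the sector condition $\mu\le|\arg z_1|$ with $\mu>\al_1\pi/2$ are exactly what make this angular separation argument work; relaxing either would destroy the $1/(1+|z_1|)$ decay.
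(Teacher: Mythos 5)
The paper itself gives no proof of this lemma: it defers to \cite[Lemmata 3.1--3.2]{LLY15}, remarking only that the restriction $\be_0\in(0,2)$ there can be relaxed to $\be_0>0$ by the same arguments. Your proposal follows exactly that standard route. Part (a) is correct and complete: peeling off the $\bm k=\bm 0$ term, absorbing $z_j$ into $E_{\bm\be,\be_0+\be_j}$ by the shift $k_j\mapsto k_j+1$, and summing via the multinomial Pascal identity --- which is precisely the identity the paper itself quotes from \cite[Equation (5.1)]{LLY15} in its proof of Lemma \ref{lem-remainder} --- settles the claim, with absolute convergence of the defining series justifying the rearrangement. For part (b), the Hankel representation, the choice $\te\in(\pi/2,\mu/\al_1)$ (exactly where the hypothesis $\al_1\pi/2<\mu<\al_1\pi$ enters), and the final split into large and small $|z_1|$ are all sound; your argument also makes transparent why relaxing to $\be_0>0$ costs nothing, since the factor $\ze^{\al_1-\be_0}$ is harmless on a fixed contour for any $\be_0$.

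The one step that does not hold as written is the claim that the terms $\sum_{j\ge2}z_j\ze^{\al_j}$ are ``harmless for small $|\ze|$,'' so that $|\ze^{\al_1}-z_1-\sum_{j\ge2}z_j\ze^{\al_j}|\ge c'(|\ze|^{\al_1}+|z_1|)$ holds along the whole fixed contour whenever $|z_1|\ge1$. On the portion of the contour where $|\ze|$ is of order one, $K\sum_{j\ge2}|\ze|^{\al_j}$ is of size $K$, not small, so for moderate $|z_1|$ the subtraction $c(|\ze|^{\al_1}+|z_1|)-K\sum_{j\ge2}|\ze|^{\al_j}$ can be negative; indeed, using only $|z_j|\le K$ --- which is all your estimates actually invoke --- the denominator can even have zeros there, which would also invalidate the contour deformation. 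Two repairs are available. (i) Keep your structure but raise the threshold: for $|z_1|\ge R_1$ with $R_1$ large depending on $K$, $m$, $\bm\al$, $\te$, the domination argument does give the bound on the whole contour and rules out zeros in the deformation region, while for $|z_1|\le R_1$ the entire function $E$ is bounded on the relevant compact set; this never uses the sign of $z_j$. (ii) Actually exploit $z_j<0$, which your closing paragraph appeals to but your estimates never use: writing the denominator as $w(\ze)-z_1$ with $w(\ze)=\ze^{\al_1}+\sum_{j\ge2}|z_j|\ze^{\al_j}$, all summands of $w(\ze)$ have arguments between $0$ and $\al_1\arg\ze$, a sector of opening at most $\al_1\te<\pi$, whence $|\arg w(\ze)|\le\al_1\te<\mu$ and $|w(\ze)|\ge\cos(\al_1\te/2)\,|\ze|^{\al_1}$; the angular-separation inequality applied to $w(\ze)$ and $z_1$ then yields the uniform lower bound for every $z_1$ in its sector, with no largeness assumption. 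With either repair your proof is complete.
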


We observe that the above properties are almost identical to \cite[Lemmata 3.1--3.2]{LLY15} except that the restriction $\be_0\in(0,2)$ is relaxed to $\be_0>0$. In fact, one can show Lemma \ref{lem-ML} with $\be_0>0$ by the same arguments as that in \cite{LLY15}, and here we omit the details.

Now we concentrate more on Lemma \ref{lem-ML}(a), which provides a link between multinomial Mittag-Leffler functions with different coefficients. Since $1/\Ga(\be_0)$ is the first term in the series by which $E_{\bm\be,\be_0}(\bm z)$ is defined, we see
\[\sum_{j=1}^mz_j\,E_{\bm\be,\be_0+\be_j}(\bm z)=\sum_{k=1}^\infty\sum_{k_1+\cdots+k_m=k}\f{(k;k_1,\ldots,k_m)\prod_{j=1}^mz_j^{k_j}}{\Ga(\be_0+\bm k\cdot\bm\be)},\]
that is, the remainder after the first term in \eqref{eq-def-ML} can be written by a linear combination of multinomial Mittag-Leffler functions. Actually, the following lemma reveals that the same result holds for any remainder in \eqref{eq-def-ML}.

\begin{lem}\label{lem-remainder}
Denote by $R_{\bm\be,\be_0}^{\,\ell}(\bm z)$ the remainder after the $\ell$th term in \eqref{eq-def-ML}, i.e.,
\[R_{\bm\be,\be_0}^{\,\ell}(\bm z):=\sum_{k=\ell}^\infty\sum_{k_1+\cdots+k_m=k}\f{(k;k_1,\ldots,k_m)\prod_{j=1}^mz_j^{k_j}}{\Ga(\be_0+\bm k\cdot\bm\be)}\quad(\ell=1,2,\ldots).\]
Then there holds
\begin{equation}\label{eq-remainder}
R_{\bm\be,\be_0}^{\,\ell}(\bm z)=\sum_{\ell_1+\cdots+\ell_m=\ell}(\ell;\ell_1,\ldots,\ell_m)\prod_{j=1}^mz_j^{\ell_j}E_{\bm\be,\be_0+\bm\ell\cdot\bm\be}(\bm z)\quad(\ell=1,2,\ldots),
\end{equation}
where $\bm\ell:=(\ell_1,\ldots,\ell_m)$.
\end{lem}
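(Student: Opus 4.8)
The plan is to verify the identity by expanding the right-hand side of \eqref{eq-remainder} directly and reducing the claim to a purely combinatorial convolution identity for multinomial coefficients. Since each multinomial Mittag-Leffler series converges absolutely ($E_{\bm\be,\be_0}$ is entire in $\bm z$), every rearrangement performed below is legitimate.

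First I would expand each factor $E_{\bm\be,\be_0+\bm\ell\cdot\bm\be}(\bm z)$ on the right-hand side according to \eqref{eq-def-ML}, writing
\[E_{\bm\be,\be_0+\bm\ell\cdot\bm\be}(\bm z)=\sum_{n=0}^\infty\sum_{n_1+\cdots+n_m=n}\f{(n;n_1,\ldots,n_m)\prod_{j=1}^mz_j^{n_j}}{\Ga(\be_0+(\bm\ell+\bm n)\cdot\bm\be)},\]
where $\bm n:=(n_1,\ldots,n_m)$. Substituting this into the right-hand side of \eqref{eq-remainder} yields a triple sum over $\bm\ell$ (with $\sum_j\ell_j=\ell$), over $n$, and over $\bm n$ (with $\sum_j n_j=n$). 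I would then set $k_j:=\ell_j+n_j$ and $k:=\ell+n$, so that $k$ ranges over all integers $k\ge\ell$. After regrouping, the summand carries the common factor $\prod_{j=1}^mz_j^{k_j}/\Ga(\be_0+\bm k\cdot\bm\be)$, and its coefficient is
\[\sum_{\substack{\ell_1+\cdots+\ell_m=\ell\\0\le\ell_j\le k_j}}(\ell;\ell_1,\ldots,\ell_m)\,(k-\ell;k_1-\ell_1,\ldots,k_m-\ell_m).\]
Comparison with the definition of $R_{\bm\be,\be_0}^{\,\ell}(\bm z)$ shows that \eqref{eq-remainder} holds provided this coefficient equals the single multinomial coefficient $(k;k_1,\ldots,k_m)$ for every $k\ge\ell$ and every $(k_1,\ldots,k_m)$ with $\sum_j k_j=k$.

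It remains to establish the multinomial Vandermonde convolution
\[\sum_{\substack{\ell_1+\cdots+\ell_m=\ell\\0\le\ell_j\le k_j}}(\ell;\ell_1,\ldots,\ell_m)\,(k-\ell;k_1-\ell_1,\ldots,k_m-\ell_m)=(k;k_1,\ldots,k_m).\]
The cleanest route is a generating-function argument: compare the coefficient of $\prod_{j=1}^m x_j^{k_j}$ on both sides of the polynomial identity $(x_1+\cdots+x_m)^k=(x_1+\cdots+x_m)^\ell\,(x_1+\cdots+x_m)^{k-\ell}$. By the multinomial theorem the left-hand coefficient is $(k;k_1,\ldots,k_m)$, while expanding each factor on the right and collecting the term $\prod_j x_j^{k_j}$ (with $\ell_j$ factors drawn from the first and $k_j-\ell_j$ from the second) reproduces exactly the convolution sum above. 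This identity then collapses the regrouped right-hand side of \eqref{eq-remainder} onto $R_{\bm\be,\be_0}^{\,\ell}(\bm z)$, completing the proof.

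The only genuinely delicate point is the bookkeeping in the index substitution $k_j=\ell_j+n_j$ together with the justification that the absolutely convergent triple series may be freely reindexed; both are routine once absolute convergence is invoked. An alternative would be induction on $\ell$, taking the remark preceding the lemma (the case $\ell=1$, a direct consequence of Lemma \ref{lem-ML}(a)) as the base case and applying the recurrence termwise in the inductive step; however, this replaces the transparent Vandermonde identity with a more cumbersome Pascal-type recursion for multinomial coefficients, so I would favor the direct computation above.
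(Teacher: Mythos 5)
Your proof is correct, but it takes a genuinely different route from the paper's. The paper argues by induction on $\ell$: the base case is Lemma \ref{lem-ML}(a), and in the inductive step Lemma \ref{lem-ML}(a) is applied to each factor $E_{\bm\be,\be_0+\bm\ell\cdot\bm\be}(\bm z)$, the terms with $\ell_j=\ell$ are split off, the remaining summation indices are shifted by one, and the result is collapsed by the Pascal-type recursion
\[\sum_{j=1}^m(\ell;\ell_1,\ldots,\ell_{j-1},\ell_j-1,\ell_{j+1},\ldots,\ell_m)=(\ell+1;\ell_1,\ldots,\ell_m),\]
i.e.\ exactly the ``more cumbersome'' alternative you mention and discard at the end. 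You instead expand the right-hand side of \eqref{eq-remainder} once and for all, reindex via $k_j=\ell_j+n_j$, and reduce the claim to the multinomial Vandermonde convolution, which you verify by comparing coefficients in $(x_1+\cdots+x_m)^k=(x_1+\cdots+x_m)^\ell\,(x_1+\cdots+x_m)^{k-\ell}$. Both arguments are sound. What the paper's induction buys is that every regrouping involves only finitely many terms --- the analytic content is delegated entirely to the cited Lemma \ref{lem-ML}(a) --- so no convergence discussion is needed, at the cost of the heavier index bookkeeping visible in its chain of displays. What your computation buys is a one-shot proof resting on a transparent classical identity, and it in fact subsumes Lemma \ref{lem-ML}(a) as the case $\ell=1$ rather than relying on it; the only overhead is justifying the rearrangement of the infinite double series, which you discharge correctly by absolute convergence: by the multinomial theorem the absolute series is dominated (up to finitely many terms, since $\Ga$ is eventually increasing) by $\sum_{k\ge0}(|z_1|+\cdots+|z_m|)^k/\Ga(\be_0+k\min_j\be_j)<\infty$, so the entirety of $E_{\bm\be,\be_0}$ that you invoke is indeed elementary.
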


\begin{proof}
It is natural to show by induction with respect to $\ell=1,2,\ldots$ because the case of $\ell=1$ is verified in Lemma \ref{lem-ML}(a).

Now suppose that \eqref{eq-remainder} is valid for some $\ell$. Then it suffices to verify that the summation after the $(\ell+1)$th term in \eqref{eq-def-ML} preserves the form of \eqref{eq-remainder}, that is,
\begin{align*}
R_{\bm\be,\be_0}^{\,\ell}(\bm z) & =\sum_{\ell_1+\cdots+\ell_m=\ell}\f{(\ell;\ell_1,\ldots,\ell_m)\prod_{j=1}^mz_j^{\ell_j}}{\Ga(\be_0+\bm\ell\cdot\bm\be)}\\
& \quad\,+\sum_{\ell_1+\cdots+\ell_m=\ell+1}(\ell+1;\ell_1,\ldots,\ell_m)\prod_{j=1}^mz_j^{\ell_j}E_{\bm\be,\be_0+\bm\ell\cdot\bm\be}(\bm z).
\end{align*}
To this end, we apply Lemma \ref{lem-ML}(a) to \eqref{eq-remainder} to deduce
\begin{align}
R_{\bm\be,\be_0}^{\,\ell}(\bm z) & =\sum_{\ell_1+\cdots+\ell_m=\ell}(\ell;\ell_1,\ldots,\ell_m)\prod_{j=1}^mz_j^{\ell_j}E_{\bm\be,\be_0+\bm\ell\cdot\bm\be}(\bm z)\nonumber\\
& =\sum_{\ell_1+\cdots+\ell_m=\ell}(\ell;\ell_1,\ldots,\ell_m)\prod_{j=1}^mz_j^{\ell_j}\left\{\f1{\Ga(\be_0+\bm\ell\cdot\bm\be)}+\sum_{j=1}^mz_j\,E_{\bm\be,\be_0+\bm\ell\cdot\bm\be+\be_j}(\bm z)\right\}\nonumber\\
& =\sum_{\ell_1+\cdots+\ell_m=\ell}\f{(\ell;\ell_1,\ldots,\ell_m)\prod_{j=1}^mz_j^{\ell_j}}{\Ga(\be_0+\bm\ell\cdot\bm\be)}+\sum_{j=1}^mz_j^{\ell+1}E_{\bm\be,\be_0+(\ell+1)\be_j}(\bm z)\nonumber\\
& \quad\,+\sum_{j=1}^m\sum_{\substack{\ell_1+\cdots+\ell_m=\ell\\\ell_j<\ell}}(\ell;\ell_1,\ldots,\ell_m)\,z_j\prod_{i=1}^mz_i^{\ell_i}E_{\bm\be,\be_0+\bm\ell\cdot\bm\be+\be_j}(\bm z)\label{eq-pf-1}\\
& =\sum_{\ell_1+\cdots+\ell_m=\ell}\f{(\ell;\ell_1,\ldots,\ell_m)\prod_{j=1}^mz_j^{\ell_j}}{\Ga(\be_0+\bm\ell\cdot\bm\be)}+\sum_{j=1}^mz_j^{\ell+1}E_{\bm\be,\be_0+(\ell+1)\be_j}(\bm z)\nonumber\\
& \quad\,+\sum_{j=1}^m\sum_{\substack{\ell_1+\cdots+\ell_m=\ell+1\\0<\ell_j<\ell+1}}(\ell;\ell_1,\ldots,\ell_{j-1},\ell_j-1,\ell_{j+1},\ldots,\ell_m)\prod_{i=1}^mz_i^{\ell_i}E_{\bm\be,\be_0+\bm\ell\cdot\bm\be}(\bm z)\label{eq-pf-2}\\
& =\sum_{\ell_1+\cdots+\ell_m=\ell}\f{(\ell;\ell_1,\ldots,\ell_m)\prod_{j=1}^mz_j^{\ell_j}}{\Ga(\be_0+\bm\ell\cdot\bm\be)}+\sum_{j=1}^mz_j^{\ell+1}E_{\bm\be,\be_0+(\ell+1)\be_j}(\bm z)\nonumber\\
& \quad\,+\sum_{\substack{\ell_1+\cdots+\ell_m=\ell+1\\\ell_j<\ell+1\ (\forall\,j)}}(\ell+1;\ell_1,\ldots,\ell_m)\prod_{i=1}^mz_i^{\ell_i}E_{\bm\be,\be_0+\bm\ell\cdot\bm\be}(\bm z)\label{eq-pf-3}\\
& =\sum_{\ell_1+\cdots+\ell_m=\ell}\f{(\ell;\ell_1,\ldots,\ell_m)\prod_{j=1}^mz_j^{\ell_j}}{\Ga(\be_0+\bm\ell\cdot\bm\be)}\nonumber\\
& \quad\,+\sum_{\ell_1+\cdots+\ell_m=\ell+1}(\ell+1;\ell_1,\ldots,\ell_m)\prod_{j=1}^mz_j^{\ell_j}E_{\bm\be,\be_0+\bm\ell\cdot\bm\be}(\bm z),\nonumber
\end{align}
where we distill the case of $\ell_j=\ell$ in the $j$th term in \eqref{eq-pf-1}, and substitute $\ell_j+1$ with $\ell_j$ in the other terms to obtain \eqref{eq-pf-2}. For \eqref{eq-pf-3}, we apply the multinomial coefficient formula (see, e.g., \cite[Equation (5.1)]{LLY15})
\[\sum_{j=1}^m(\ell;\ell_1,\ldots,\ell_{j-1},\ell_j-1,\ell_{j+1},\ldots,\ell_m)=(\ell+1;\ell_1,\ldots,\ell_m)\quad\mbox{with }\ell+1=\sum_{j=1}^m\ell_j.\]
Therefore, \eqref{eq-remainder} also holds true for $\ell+1$ and the proof is completed.
\end{proof}

The above fact, together with the boundedness result in Lemma \ref{lem-ML}(b), will play a crucial role in the proof of the strong maximum principle.

Concerning some important existing results of the solution to \eqref{eq-ibvp-u}, we shall prepare several lemmata for later use. First we state the well-posedness and the long-time asymptotic behavior of the solution to \eqref{eq-ibvp-u}.

\begin{lem}[see \cite{LLY15}]\label{lem-lly}
Fix $T>0$ arbitrarily. Concerning the solution $u$ to \eqref{eq-ibvp-u}, the followings hold true.

{\rm(a)} Let $a\in L^2(\Om)$ and $F=0$. Then there exists a unique solution $u\in C([0,T];L^2(\Om))\cap C((0,T];H^2(\Om)\cap H_0^1(\Om))$, which is explicitly written as
\begin{equation}\label{eq-rep-v}
u(\,\cdot\,,t)=\sum_{n=1}^\infty\left(1-\la_nt^{\al_1}E_{\bm\al',1+\al_1}^{(n)}(t)\right)(a,\vp_n)\,\vp_n
\end{equation}
in $C([0,T];L^2(\Om))\cap C((0,T];H^2(\Om)\cap H_0^1(\Om))$, where $\{(\la_n,\vp_n)\}_{n=1}^\infty$ is the eigensystem of $\cA$ and
\[E_{\bm\al',1+\al_1}^{(n)}(t):=E_{(\al_1,\al_1-\al_2,\ldots,\al_1-\al_m),1+\al_1}(-\la_nt^{\al_1},-q_2t^{\al_1-\al_2},\ldots,-q_mt^{\al_1-\al_m}).\]
Moreover, there exists a constant $C>0$ independent of $a$ such that
\begin{equation}\label{eq-est-H2}
\|u(\,\cdot\,,t)\|_{L^2(\Om)}\le C\|a\|_{L^2(\Om)},\quad\|u(\,\cdot\,,t)\|_{H^2(\Om)}\le C\,t^{-\al_1}\|a\|_{L^2(\Om)}\quad(0<t\le T).
\end{equation}
In addition, there holds
\begin{equation}\label{eq-asymp}
\left\|u(\,\cdot\,,t)-\f{q_m}{\Ga(1-\al_m)}\f{\cA^{-1}a}{t^{\al_m}}\right\|_{H^2(\Om)}\le\f{C\|a\|_{L^2(\Om)}}{t^{\min\{\al_{m-1},2\al_m\}}}\quad\mbox{as }t\to\infty.
\end{equation}

{\rm(b)} Let $a=0$ and $F\in L^\infty(0,T;L^2(\Om))$. Then there exists a unique solution $u\in L^2(0,T;$ $H^2(\Om)\cap H_0^1(\Om))$ such that $\lim_{t\to0}\|u(\,\cdot\,,t)\|_{L^2(\Om)}=0$.
\end{lem}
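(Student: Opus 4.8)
The natural plan is to solve \eqref{eq-ibvp-u} by spectral decomposition against the eigenbasis $\{\vp_n\}$ of $\cA$, reducing the problem to a countable family of scalar multi-term fractional ODEs, and then to read off every quantitative statement from the Laplace representation of each Fourier coefficient together with the decay estimate of Lemma \ref{lem-ML}(b). Writing $u_n(t):=(u(\,\cdot\,,t),\vp_n)$, $a_n:=(a,\vp_n)$, $F_n(t):=(F(\,\cdot\,,t),\vp_n)$ and, for any $\be_0>0$, abbreviating $E_{\bm\al',\be_0}^{(n)}(t):=E_{\bm\al',\be_0}(-\la_nt^{\al_1},-q_2t^{\al_1-\al_2},\ldots,-q_mt^{\al_1-\al_m})$ (so that \eqref{eq-rep-v} is the case $\be_0=1+\al_1$), the $n$th coefficient must satisfy $\sum_{j=1}^mq_j\pa_t^{\al_j}u_n(t)+\la_nu_n(t)=F_n(t)$ with $u_n(0)=a_n$. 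Setting $P(s):=\sum_{j=1}^mq_js^{\al_j}$ and passing to the Laplace transform $\wt u_n$ of $u_n$ (via $\mathcal{L}[\pa_t^{\al_j}u_n](s)=s^{\al_j}\wt u_n(s)-s^{\al_j-1}a_n$), I obtain in the homogeneous case $F=0$ that $\wt u_n(s)=a_nP(s)/(s(P(s)+\la_n))$. A term-by-term inversion of the series \eqref{eq-def-ML}, legitimate for $\re s$ large, gives $\mathcal{L}[t^{\al_1}E_{\bm\al',1+\al_1}^{(n)}(t)](s)=1/(s(P(s)+\la_n))$, whence $u_n(t)=a_n(1-\la_nt^{\al_1}E_{\bm\al',1+\al_1}^{(n)}(t))$; summation over $n$ is precisely \eqref{eq-rep-v}, and uniqueness is immediate since the transformed ODE pins down each $u_n$.

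For \eqref{eq-est-H2} I would argue purely on the coefficients $v_n(t):=1-\la_nt^{\al_1}E_{\bm\al',1+\al_1}^{(n)}(t)$. On $0<t\le T$ the argument $z_1=-\la_nt^{\al_1}$ has $|\arg z_1|=\pi$ while each $z_j=-q_jt^{\al_1-\al_j}$ lies in $[-q_jT^{\al_1-\al_j},0)$, so Lemma \ref{lem-ML}(b) applies uniformly in $n$ and yields $|E_{\bm\al',\be_0}^{(n)}(t)|\le C/(1+\la_nt^{\al_1})$ for each fixed $\be_0>0$. This already bounds $|v_n(t)|$, hence the first estimate in \eqref{eq-est-H2} through $\sum_n|v_n(t)|^2|a_n|^2\le C\|a\|_{L^2(\Om)}^2$. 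The obstacle in the $H^2$-bound is the stray constant $1$ in $v_n$, which is not small; the device that removes it is Lemma \ref{lem-ML}(a) with $\be_0=1$, rewriting $v_n(t)=E_{\bm\al',1}^{(n)}(t)+\sum_{j=2}^mq_jt^{\al_1-\al_j}E_{\bm\al',1+\al_1-\al_j}^{(n)}(t)$. Every summand now carries a Mittag-Leffler factor, and $\la_n/(1+\la_nt^{\al_1})\le t^{-\al_1}$ gives $\la_n|v_n(t)|\le C(t^{-\al_1}+\sum_{j=2}^mt^{-\al_j})\le Ct^{-\al_1}$ on $(0,T]$. Since elliptic regularity yields $\|u(\,\cdot\,,t)\|_{H^2(\Om)}\le C\|\cA u(\,\cdot\,,t)\|_{L^2(\Om)}=C(\sum_n\la_n^2|v_n(t)|^2|a_n|^2)^{1/2}$, the second estimate in \eqref{eq-est-H2} follows, and convergence of \eqref{eq-rep-v} in $C([0,T];L^2(\Om))\cap C((0,T];H^2(\Om)\cap H_0^1(\Om))$ drops out of these uniform bounds and the continuity of each $v_n$.

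The asymptotic profile \eqref{eq-asymp} is where the finite-$T$ estimates break down, since as $t\to\infty$ the arguments $z_j$ $(j\ge2)$ leave the admissible region of Lemma \ref{lem-ML}(b). Here I would instead extract the leading behaviour from the small-$s$ form of the transform: because $P(s)\to0$ as $s\to0^+$ with dominant term $q_ms^{\al_m}$, one has $P(s)/(s(P(s)+\la_n))=(q_m/\la_n)s^{\al_m-1}+O(s^{\al_{m-1}-1})+O(s^{2\al_m-1})$, and $\mathcal{L}^{-1}[s^{\al_m-1}]=t^{-\al_m}/\Ga(1-\al_m)$ reproduces the stated leading term $(q_m/\Ga(1-\al_m))\,\cA^{-1}a\,t^{-\al_m}$, the two correction orders combining to the rate $t^{-\min\{\al_{m-1},2\al_m\}}$. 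Turning this formal computation into a genuine $H^2$-bound with remainder uniform in $n$ is, in my view, \emph{the single hardest step}; the cleanest route is to iterate the functional equation Lemma \ref{lem-ML}(a) to peel off the successive powers $t^{-\al_m}$, $t^{-\al_{m-1}}$, $t^{-2\al_m}$ and to control the tail, or equivalently to invoke a large-argument expansion of the multinomial Mittag-Leffler function.

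For part (b) the plan is the fractional Duhamel principle. The same inversion with second index $\al_1$ gives $\mathcal{L}[t^{\al_1-1}E_{\bm\al',\al_1}^{(n)}(t)](s)=1/(P(s)+\la_n)$, so for $a_n=0$ the coefficient equation is solved by $u_n(t)=\int_0^tk_n(t-\tau)\,F_n(\tau)\,\rd\tau$ with $k_n(s):=s^{\al_1-1}E_{\bm\al',\al_1}^{(n)}(s)$. The subtle point is that the crude bound of Lemma \ref{lem-ML}(b) only makes $\la_nk_n$ logarithmically $L^1$-in-time; the right uniform bound instead comes from the primitive identity $\la_n\int_0^tk_n(s)\,\rd s=1-v_n(t)$, which is bounded in $t$, together with the sign-definiteness of $k_n$ (the multi-term analogue of $E_{\al,\al}(-x)\ge0$), giving $\la_n\|k_n\|_{L^1(0,T)}\le C$ uniformly in $n$. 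Young's inequality then yields $\sum_n\|\la_nu_n\|_{L^2(0,T)}^2\le C\sum_n\|F_n\|_{L^2(0,T)}^2=C\|F\|_{L^2(0,T;L^2(\Om))}^2$, i.e.\ $u\in L^2(0,T;H^2(\Om)\cap H_0^1(\Om))$, while $\lim_{t\to0}\|u(\,\cdot\,,t)\|_{L^2(\Om)}=0$ follows from the vanishing of each convolution at $t=0$ by dominated convergence, and uniqueness again from the transformed equation. The recurring obstacles are therefore the uniform-in-$n$ control of the Mittag-Leffler factors — mostly delegated to Lemma \ref{lem-ML}, but needing the sharper $L^1$ kernel bound here — and the rigorous large-$t$ expansion behind \eqref{eq-asymp}.
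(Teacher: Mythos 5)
First, a point of reference: the paper itself gives no proof of this lemma --- it is imported wholesale from \cite{LLY15} --- so your proposal must be measured against that reference, whose strategy (eigenfunction expansion, Laplace transform of each coefficient ODE, multinomial Mittag-Leffler bounds) is exactly the one you reconstruct. Your treatment of the representation \eqref{eq-rep-v} (identifying $\mathcal{L}[t^{\al_1}E^{(n)}_{\bm\al',1+\al_1}(t)](s)=1/(s(P(s)+\la_n))$ by term-by-term inversion), of both estimates in \eqref{eq-est-H2} (using Lemma \ref{lem-ML}(a) with $\be_0=1$ to absorb the stray constant, then Lemma \ref{lem-ML}(b) and $\la_n/(1+\la_nt^{\al_1})\le t^{-\al_1}$), and of part (b) (the uniform bound $\la_n\|k_n\|_{L^1(0,T)}\le C$ via the primitive identity $\la_n\int_0^tk_n\,\rd s=\la_nt^{\al_1}E^{(n)}_{\bm\al',1+\al_1}(t)$, rather than the logarithmically divergent crude bound) is correct and is genuinely the right mechanism. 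Two loose ends in (b) are fixable but should be flagged: the sign $k_n\ge0$ is not free --- it follows from the complete monotonicity of $1-\la_nt^{\al_1}E^{(n)}_{\bm\al',1+\al_1}(t)$ due to Bazhlekova \cite{B13}, which this paper quotes elsewhere --- and the dominated-convergence argument for $\lim_{t\to0}\|u(\,\cdot\,,t)\|_{L^2(\Om)}=0$ needs a summable dominant, supplied by $\|k_n\|_{L^1}\le C/\la_n$ together with $\sum_n\la_n^{-2}<\infty$ (Weyl asymptotics, $d\le3$).

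The genuine gap is \eqref{eq-asymp}, precisely where you flagged it, and your proposed repairs do not both work. The small-$s$ expansion of $\wt u_n(s)=a_nP(s)/(s(P(s)+\la_n))$ followed by formal term-by-term inversion is a Tauberian heuristic, not a proof: $\wt f(s)=O(s^{\ga-1})$ as $s\to0$ does not imply $f(t)=O(t^{-\ga})$ without additional structure. Your first suggested route --- iterating Lemma \ref{lem-ML}(a) --- cannot close the argument, because after peeling off the powers $t^{-\al_m}$, $t^{-\al_{m-1}}$, $t^{-2\al_m}$ the remainders still carry multinomial Mittag-Leffler factors evaluated at $z_j=-q_jt^{\al_1-\al_j}$ ($j\ge2$), which are \emph{unbounded} as $t\to\infty$ and hence lie outside the region $-K\le z_j<0$ where Lemma \ref{lem-ML}(b) holds; this is the very obstruction you identified for the finite-$T$ bounds, and it resurfaces intact. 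What makes your second alternative rigorous (and is effectively the argument behind \cite{LLY15}) is contour deformation of the Laplace inversion: since $q_j>0$ and $\la_n>0$, one checks $\mathrm{Im}(P(s)+\la_n)\ne0$ for $0<|\arg s|<\pi$, so $P(s)+\la_n$ has no zeros in the cut plane and the Bromwich contour collapses onto the two sides of the negative real axis, giving $u_n(t)=a_n\int_0^\infty\e^{-rt}K_n(r)\,\rd r$ with an explicit density $K_n$; Watson's lemma applied to the small-$r$ behaviour of $K_n$ then produces the leading term $q_m t^{-\al_m}/(\la_n\Ga(1-\al_m))$ with remainder bounded by $C\,t^{-\min\{\al_{m-1},2\al_m\}}/\la_n$, $C$ uniform in $n$, and summation against $\la_n^2|a_n|^2$ yields the $H^2$ estimate. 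Without carrying out this step, \eqref{eq-asymp} --- on which Lemma \ref{lem-asymp}, and hence the entire strong maximum principle of this paper, rests --- remains unproved.
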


Regarding the time-analyticity of the solution to \eqref{eq-ibvp-u}, we have the following result.

\begin{lem}\label{lem-analy}
Let $a\in L^2(\Om)$, $F=0$ and $u$ be the solution to \eqref{eq-ibvp-u}. Then for arbitrarily small $\ve>0$, $u:(0,T]\to\cD(\cA^{1-\ve})$ can be analytically extended to a sector $\{z\in\BC;\,z\ne0,\ |\arg\,z|<\pi/2\}$.
\end{lem}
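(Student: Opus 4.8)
The plan is to continue the eigenfunction expansion \eqref{eq-rep-v} term by term into the sector $\Sigma:=\{z\in\BC;\,z\ne0,\ |\arg z|<\pi/2\}$. On $\Sigma$ (indeed on $|\arg z|<\pi$) the powers $z^{\al_1}$ and $z^{\al_1-\al_j}$ are defined by the principal branch and are analytic, so setting
\[
u_n(z):=1-\la_nz^{\al_1}E_{\bm\al',1+\al_1}^{(n)}(z),\qquad E_{\bm\al',1+\al_1}^{(n)}(z):=E_{\bm\al',1+\al_1}(-\la_nz^{\al_1},-q_2z^{\al_1-\al_2},\ldots,-q_mz^{\al_1-\al_m})
\]
(with $\bm\al'=(\al_1,\al_1-\al_2,\ldots,\al_1-\al_m)$ and the obvious extension of the notation of Lemma \ref{lem-lly}(a) to complex $z$ and to a general second index), I would first note that each $u_n$ is analytic on $\Sigma$. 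Indeed, the series \eqref{eq-def-ML} converges absolutely and locally uniformly on all of $\BC^m$, since $\sum_{k_1+\cdots+k_m=k}(k;k_1,\ldots,k_m)\prod_j|z_j|^{k_j}=(\sum_j|z_j|)^k$ while $1/\Ga(\be_0+\bm k\cdot\bm\be)$ decays super-exponentially in $k$; thus $E_{\bm\be,\be_0}$ is entire in $\bm z$, and composing with the analytic powers preserves analyticity. The candidate extension is $\tilde u(z):=\sum_{n=1}^\infty u_n(z)\,(a,\vp_n)\,\vp_n$, which coincides with $u(\,\cdot\,,t)$ when $z=t\in(0,T]$.

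Next I would exhibit the cancellation hidden in $u_n$. Applying Lemma \ref{lem-ML}(a) with $\be_0=1$---legitimate since the restriction on $\be_0$ has been relaxed to $\be_0>0$---gives
\[
u_n(z)=E_{\bm\al',1}^{(n)}(z)+\sum_{j=2}^mq_j\,z^{\al_1-\al_j}\,E_{\bm\al',1+\al_1-\al_j}^{(n)}(z),
\]
so that $u_n$ is a finite combination of multinomial Mittag-Leffler functions all sharing the arguments $(-\la_nz^{\al_1},-q_2z^{\al_1-\al_2},\ldots)$. Granting a decay estimate
\[
|E_{\bm\al',\be_0}^{(n)}(z)|\le\f C{1+\la_n|z|^{\al_1}}\qquad(z\in S)
\]
on a compact set $S\subset\Sigma$, with $C$ independent of $n$, and using that the prefactors $q_jz^{\al_1-\al_j}$ are bounded on $S$, one obtains $|u_n(z)|\le C_S/(1+\la_n|z|^{\al_1})$ for $z\in S$. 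Since $|z|$ is bounded below on $S$, this yields a bound $\la_n^{1-\ve}|u_n(z)|\le M_S$ that is uniform in both $n$ and $z\in S$.

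The conclusion then follows by a vector-valued Weierstrass argument. For $z\in S$ and $N\in\BN$,
\[
\Big\|\sum_{n>N}u_n(z)\,(a,\vp_n)\,\vp_n\Big\|_{\cD(\cA^{1-\ve})}^2=\sum_{n>N}\la_n^{2(1-\ve)}|u_n(z)|^2\,|(a,\vp_n)|^2\le M_S^2\sum_{n>N}|(a,\vp_n)|^2,
\]
which tends to $0$ as $N\to\infty$ uniformly in $z\in S$ because $\sum_n|(a,\vp_n)|^2=\|a\|_{L^2(\Om)}^2<\infty$. Hence the partial sums, each a finite sum of $\cD(\cA^{1-\ve})$-valued analytic functions on $\Sigma$, converge locally uniformly, so $\tilde u:\Sigma\to\cD(\cA^{1-\ve})$ is analytic; as $\tilde u(t)=u(\,\cdot\,,t)$ on $(0,T]$, it is the sought analytic extension.

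The hard part is the decay estimate invoked above. Lemma \ref{lem-ML}(b) provides exactly $E_{\bm\al',\be_0}^{(n)}\le C/(1+|z_1|)$ with $z_1=-\la_nz^{\al_1}$, but only for \emph{real} data, namely $\mu\le|\arg z_1|\le\pi$ together with $-K\le z_j<0$ for $j\ge2$; for complex $z\in\Sigma$ the arguments $z_j=-q_jz^{\al_1-\al_j}$ ($j\ge2$) are genuinely complex, so the main obstacle is to upgrade Lemma \ref{lem-ML}(b) to complex $z_j$ lying in a thin sector about the negative real axis. On a compact $S$ this is manageable: writing $\theta:=\max_{z\in S}|\arg z|<\pi/2$, the first argument satisfies $|\arg z_1|\ge\pi-\al_1\theta$, which stays strictly above $\al_1\pi/2$ because $\al_1<1$, so one can fix $\mu\in(\al_1\pi/2,\al_1\pi)$ with $\mu\le|\arg z_1|$ throughout $S$; meanwhile $z_2,\ldots,z_m$ remain confined to a fixed bounded region of the left half-plane at bounded angular distance from $\BR_{<0}$. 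Only the first variable is sent to infinity as $\la_n\to\infty$, the others staying bounded, so the contour/asymptotic arguments of \cite{LLY15} that establish the real version carry over to this regime and furnish the required uniform bound, completing the proof.
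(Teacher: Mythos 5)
First, a point of reference: the paper does not actually prove this lemma. It cites \cite[Theorem 4.1]{LY13a} for analyticity with values in $H_0^1(\Om)$ and explicitly skips the ``more delicate reasoning'' that upgrades the target space to $\cD(\cA^{1-\ve})$, so your proposal is being measured against a gesture rather than a written proof. Within that context, your skeleton is the natural one and almost certainly the intended argument: term-by-term extension of \eqref{eq-rep-v}, then the identity from Lemma \ref{lem-ML}(a) with $\be_0=1$,
\[
1-\la_nz^{\al_1}E_{\bm\al',1+\al_1}^{(n)}(z)=E_{\bm\al',1}^{(n)}(z)+\sum_{j=2}^mq_jz^{\al_1-\al_j}E_{\bm\al',1+\al_1-\al_j}^{(n)}(z),
\]
which converts the non-decaying coefficient into a sum of terms each of size $O(1/(1+\la_n|z|^{\al_1}))$, and finally a Banach-valued Weierstrass argument. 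You verified the identity correctly, the entireness of \eqref{eq-def-ML} is argued correctly, and the cancellation step is exactly what buys the extra spatial regularity (without it one only gets $L^2(\Om)$-valued analyticity). In fact, on a compact set your bound $\la_n^{1-\ve}|u_n(z)|\le M_S$ holds even with $\ve=0$, so granting your decay estimate the method delivers slightly more than the lemma asserts.

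However, there is a genuine gap, and you have located it yourself: everything hinges on the estimate $|E_{\bm\al',\be_0}^{(n)}(z)|\le C/(1+\la_n|z|^{\al_1})$, uniform in $n$, for $z$ in a compact subset of the sector, and Lemma \ref{lem-ML}(b) does not supply it, since for non-real $z$ the arguments $-q_jz^{\al_1-\al_j}$ ($j\ge2$) are non-real while the lemma requires them to lie in $[-K,0)$. Your angular bookkeeping for the first argument is correct ($|\arg(-\la_nz^{\al_1})|\ge\pi-\al_1\theta>\al_1\pi/2$ on a compact subset), but the closing claim that the contour/asymptotic arguments of \cite{LLY15} ``carry over'' is an assertion, not a proof; that extension is precisely the technical content of the lemma, essentially what \cite[Theorem 4.1]{LY13a} establishes. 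One must rerun the integral-representation analysis and verify that the relevant denominators stay bounded away from zero, with constants uniform in $n$, once the bounded arguments acquire nonzero phases --- true, but not routine bookkeeping. So what you have actually accomplished is a correct and useful reduction of the lemma to a complex-argument strengthening of Lemma \ref{lem-ML}(b); the proof is incomplete at exactly that load-bearing step.
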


We note that the above statement is stronger than \cite[Theorem 4.1]{LY13a} which asserts the analyticity up to $H^1_0(\Om)$. Indeed, one can improve the regularity to arbitrarily close to $H^2(\Om)\cap H_0^1(\Om)$ by a more delicate reasoning, but here we skip the details.

Finally, we recall the weak maximum principle for \eqref{eq-ibvp-u}, which is the starting point of this paper.

\begin{lem}\label{lem-wmp}
Let $a\in L^2(\Om)$ and $F\in L^\infty(0,\infty;L^2(\Om))$ be nonnegative, and $u$ be the solution to \eqref{eq-ibvp-u}. Then there holds $u\ge0$ a.e.\! in $\Om\times(0,\infty)$.
\end{lem}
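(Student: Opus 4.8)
The plan is to prove the asserted comparison by an extremum-principle argument applied to $v:=-u$, first in the setting of smooth data where $u$ is a classical solution, and then to remove the regularity by approximation. Observe that $v$ formally solves \eqref{eq-ibvp-u} with initial value $-a\le0$, vanishing boundary value, and source $-F\le0$, so it suffices to establish $v\le0$.

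The first ingredient I would record is the extremum principle for a single Caputo derivative: if $w\in C^1([0,t_0])$ attains its maximum over $[0,t_0]$ at $t=t_0$, then
\[\pa_t^{\al}w(t_0)\ge\f{t_0^{-\al}}{\Ga(1-\al)}\bigl(w(t_0)-w(0)\bigr)\ge0.\]
This I would obtain by setting $g:=w-w(t_0)\le0$, integrating by parts in the defining integral of $\pa_t^\al w(t_0)$ (the boundary contribution at $s=t_0$ vanishes since $1-\al>0$ and $g(t_0)=0$), and using that $g\le0$ on $[0,t_0]$ together with $g(0)=w(0)-w(t_0)\le0$.

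Assume first that the data are smooth enough that $v$ is a classical solution, i.e. continuous on $\ov\Om\times[0,T]$, of class $C^2$ in $x$ and $C^1$ in $t$ on $\Om\times(0,T]$, with $F$ continuous. Suppose for contradiction that $M:=\max_{\ov\Om\times[0,T]}v>0$. Since $v=0$ on $\pa\Om\times[0,T]$ and $v(\cdot,0)=-a\le0$, the maximum is attained at an interior point $(x_0,t_0)$ with $x_0\in\Om$, $t_0\in(0,T]$, where $v(x_0,0)=-a(x_0)\le0<M$. Applying the extremum principle to $t\mapsto v(x_0,t)$ for each index $j$ gives $\pa_t^{\al_j}v(x_0,t_0)\ge\f{t_0^{-\al_j}}{\Ga(1-\al_j)}(M-v(x_0,0))>0$, so that $\sum_{j=1}^mq_j\pa_t^{\al_j}v(x_0,t_0)>0$. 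At the interior spatial maximum $x_0$ one has $\nb v(x_0,t_0)=0$ and a negative semidefinite Hessian, hence the divergence-form term satisfies $-\sum_{i,j}\pa_j(a_{ij}\pa_iv)=-\sum_{i,j}a_{ij}\pa_i\pa_jv\ge0$ by uniform ellipticity, while $c\,v(x_0,t_0)\ge0$ because $c\ge0$ and $v(x_0,t_0)=M>0$; thus $\cA v(x_0,t_0)\ge0$. Adding the two contributions yields $\sum_jq_j\pa_t^{\al_j}v(x_0,t_0)+\cA v(x_0,t_0)>0$, contradicting the equation $\sum_jq_j\pa_t^{\al_j}v+\cA v=-F\le0$ at $(x_0,t_0)$. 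Therefore $v\le0$, i.e. $u\ge0$, on $\ov\Om\times[0,T]$.

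Finally I would remove the smoothness hypothesis by density: approximate $a\in L^2(\Om)$ and $F$ by nonnegative smooth data $a_k,F_k$ (mollification preserves nonnegativity) converging in $L^2(\Om)$ and $L^\infty(0,T;L^2(\Om))$, apply the classical case to the solutions $u_k\ge0$, and invoke the continuous dependence from Lemma \ref{lem-lly} (the estimate \eqref{eq-est-H2} and its inhomogeneous analogue) to pass to the limit, obtaining $u\ge0$ a.e.\ in $\Om\times(0,T]$. Since $T>0$ is arbitrary, the conclusion holds a.e.\ in $\Om\times(0,\infty)$. I expect the main obstacle to be precisely this reduction: the pointwise extremum-principle argument demands classical regularity up to $t=0$ that $L^2$ initial data does not supply, so one must choose the approximating data and the convergence topology with care, especially near the initial time where the $H^2$-bound in \eqref{eq-est-H2} degenerates like $t^{-\al_1}$.
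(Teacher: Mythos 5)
The paper does not actually prove this lemma: it defers to \cite[Theorem 3]{L11}, together with the adaptation explained in \cite{LY13a}, and the proof behind that citation is exactly the strategy you propose — Luchko's extremum principle for the Caputo derivative, an interior-maximum contradiction, and a density argument to reach weak solutions. The parts of your argument that are written out are sound: the integration-by-parts proof of the extremum principle is correct (the boundary term at $s=t_0$ vanishes since $1-\al>0$, the one at $s=0$ has the right sign), and so is the interior-maximum computation, including the use of $\nb v(x_0,t_0)=0$ to discard the $(\pa_ja_{ij})\pa_iv$ terms and the trace inequality for the second-order part.

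The gaps lie in the reduction, and they are genuine. First, there is an internal regularity mismatch: you state the extremum principle for $w\in C^1([0,t_0])$, but your ``classical solution'' is only $C^1$ in $t$ on $\Om\times(0,T]$, and this cannot be repaired by taking smoother data. Solutions of \eqref{eq-ibvp-u} are generically \emph{not} $C^1$ up to $t=0$: differentiating \eqref{eq-rep-v} term by term (as is done in the proof of Lemma \ref{lem-Duhamel}) gives $\pa_tu(\,\cdot\,,t)=-t^{\al_1-1}\sum_{n}\la_n\,E_{\bm\al',\al_1}^{(n)}(t)\,(a,\vp_n)\,\vp_n$, which blows up like $t^{\al_1-1}$ as $t\to0+$ for every $a\not\equiv0$, no matter how smooth. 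So the key lemma, as you state it, can never be applied to the approximating solutions $u_k$; you need its extension to $w\in C[0,t_0]\cap C^1((0,t_0])$ with $w'\in L^1(0,t_0)$ (the same proof works, integrating by parts on $[\ve,t_0]$ and letting $\ve\to0$), which is the form Luchko actually uses. Second, the classical case itself is asserted rather than established: continuity on $\ov\Om\times[0,T]$, $C^2$ regularity in $x$, and the equation holding pointwise do not follow from anything in the paper — Lemma \ref{lem-lly} gives only $C((0,T];H^2(\Om)\cap H^1_0(\Om))$, and $H^2(\Om)\not\subset C^2(\Om)$ — so this step needs eigenfunction-expansion arguments with data in $\cD(\cA^\ga)$ for large $\ga$, or Luchko's framework of generalized solutions. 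Finally, the limit step needs a continuous-dependence bound for the inhomogeneous part (not contained in Lemma \ref{lem-lly}(b) as stated, but available in \cite{LLY15}), and mollification of $F$ need not converge in $L^\infty(0,T;L^2(\Om))$, so the topology there must be weakened. You flag this reduction as the main obstacle yourself, which is fair, but it is precisely the content hidden behind the paper's citation; as it stands your proposal is a correct skeleton of the standard proof rather than a complete argument.
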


Similarly to that explained in \cite{LY13a}, the above conclusion is a special case of \cite[Theorem 3]{L11} but the settings here are more general. Since the same argument still works in our case, again we omit the proof here.

\Section{Proof of Theorem \ref{thm-smp} and Corollary \ref{coro-smp}}\label{sec-proof-smp}

Now we have collected all the necessities to investigate the strong maximum principle. Some parts of this sections are basically parallel to that in \cite{LRY15}, but essential difficulties occur with the appearance of the multinomial Mittag-Leffler functions.

In view of the weak maximum principle and the superposition principle, first we concentrate on the homogeneous problem, that is,
\begin{equation}\label{eq-ibvp-v}
\left\{\!\begin{alignedat}{2}
& \sum_{j=1}^mq_j\pa_t^{\al_j}v+\cA v=0 & \quad & \mbox{in }\Om\times(0,\infty),\\
& v=a & \quad & \mbox{in }\Om\times\{0\},\\
& v=0 & \quad & \mbox{on }\pa\Om\times(0,\infty).
\end{alignedat}\right.
\end{equation}
Henceforth we denote the solution to \eqref{eq-ibvp-v} as $v_a$ to emphasize its dependency upon the initial value $a$.

To begin with, we first show the strict positivity of the solution to \eqref{eq-ibvp-v} with a non-negative and non-vanishing initial value after sufficiently long time.

\begin{lem}\label{lem-asymp}
Let $v_a$ be the solution to \eqref{eq-ibvp-v} with $a\in L^2(\Om)$ satisfying $a\ge0$ and $a\not\equiv0$. Then for any $x\in\Om$, there exists $T>0$ sufficiently large such that
\begin{equation}\label{eq-long-pos}
v_a(x,t)>0\quad(\forall\,t\ge T).
\end{equation}
\end{lem}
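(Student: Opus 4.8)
The plan is to extract the leading-order behavior of $v_a$ from the long-time asymptotic expansion \eqref{eq-asymp} in Lemma \ref{lem-lly}(a) (applicable here with $u=v_a$, since \eqref{eq-ibvp-v} is exactly \eqref{eq-ibvp-u} with $F=0$) and to show that this leading term is strictly positive at the fixed point $x$ while dominating the remainder for all large $t$. Because $d\le3$ guarantees the Sobolev embedding $H^2(\Om)\hookrightarrow C(\ov\Om)$, the function $w:=\cA^{-1}a\in H^2(\Om)\cap H^1_0(\Om)$ admits a pointwise definition, so the principal profile $\dfrac{q_m}{\Ga(1-\al_m)}\dfrac{w}{t^{\al_m}}$ in \eqref{eq-asymp} can be evaluated at $x$.

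First I would establish the strict positivity $w>0$ in $\Om$ via the classical strong maximum principle for the elliptic operator $\cA$. By construction $w$ solves $\cA w=a$ in $\Om$ with $w=0$ on $\pa\Om$; since $a\ge0$ and $c\ge0$, the weak maximum principle gives $w\ge0$, and $w$ is a non-negative supersolution of $\cA w=0$. The strong maximum principle then forces $w$ to be either identically zero or strictly positive in $\Om$. The former would yield $a=\cA w\equiv0$, contradicting $a\not\equiv0$; hence $w>0$ throughout $\Om$. In particular, for the given $x\in\Om$ the constant $M:=\dfrac{q_m}{\Ga(1-\al_m)}\,w(x)$ is strictly positive, using $\Ga(1-\al_m)>0$ since $0<\al_m<1$.

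Next I would convert \eqref{eq-asymp} into a pointwise bound at $x$ through the embedding constant $C_{\mathrm{emb}}$ of $H^2(\Om)\hookrightarrow C(\ov\Om)$, obtaining
\[v_a(x,t)\ge\f M{t^{\al_m}}-\f{C\,C_{\mathrm{emb}}\,\|a\|_{L^2(\Om)}}{t^{\min\{\al_{m-1},2\al_m\}}}\]
for all sufficiently large $t$. The decisive point is that the remainder decays faster than the leading term: since $\al_{m-1}>\al_m$ and $2\al_m>\al_m$, one has $\min\{\al_{m-1},2\al_m\}>\al_m$. Factoring out $t^{-\al_m}$ therefore leaves a strictly positive power of $t$ multiplying $M$, so choosing $T$ large enough that $M\,t^{\min\{\al_{m-1},2\al_m\}-\al_m}>C\,C_{\mathrm{emb}}\,\|a\|_{L^2(\Om)}$ for all $t\ge T$ yields \eqref{eq-long-pos}.

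The hard part will be the pointwise strict positivity of $w=\cA^{-1}a$ at the single interior point $x$, rather than merely almost everywhere: this is exactly where the strong maximum principle for $\cA$, combined with the continuity of $w$ afforded by the $d\le3$ embedding, is indispensable, and it is the only step that genuinely uses the hypothesis $a\not\equiv0$. Once that is in hand, the remaining work—upgrading the $H^2$-estimate to a pointwise estimate and comparing the two algebraic decay rates—is routine, the sole point requiring care being that the inequality $\min\{\al_{m-1},2\al_m\}>\al_m$ persists for every admissible ordering of the exponents $\al_j$ (with the single-term case reading simply $2\al_m>\al_m$).
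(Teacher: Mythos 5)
Your proposal is correct and follows essentially the same route as the paper: apply the asymptotic expansion \eqref{eq-asymp} together with the Sobolev embedding $H^2(\Om)\hookrightarrow C(\ov\Om)$ for $d\le3$ to get a pointwise lower bound, invoke the elliptic strong maximum principle to obtain $\cA^{-1}a>0$ in $\Om$, and conclude from $\al_m<\min\{\al_{m-1},2\al_m\}$ that the leading term dominates for large $t$. The only cosmetic difference is that you spell out the supersolution dichotomy behind the strict positivity of $\cA^{-1}a$, where the paper simply cites Gilbarg--Trudinger.
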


\begin{proof}
According to the asymptotic behavior \eqref{eq-asymp} and the Sobolev embedding $H^2(\Om)\to C(\ov\Om)$ for $d\le3$, we obtain
\[\left|v_a(x,t)-\f{q_m}{\Ga(1-\al_m)}\f{b(x)}{t^{\al_m}}\right|\le C\left\|v_a(\,\cdot\,,t)-\f{q_m}{\Ga(1-\al_m)}\f b{t^{\al_m}}\right\|_{H^2(\Om)}\le\f{C\|a\|_{L^2(\Om)}}{t^{\min\{\al_{m-1},2\al_m\}}}\]
and thus
\begin{equation}\label{eq-est-b}
v_a(x,t)\ge\f{q_m}{\Ga(1-\al_m)}\f{b(x)}{t^{\al_m}}-\f{C\|a\|_{L^2(\Om)}}{t^{\min\{\al_{m-1},2\al_m\}}}
\end{equation}
for any $x\in\Om$ as $t\to\infty$, where $b:=\cA^{-1}a$, i.e., $b$ solves the boundary value problem
\begin{equation}\label{eq-bvp}
\begin{cases}
\cA b=a & \mbox{in }\Om,\\
b=0 & \mbox{on }\pa\Om.
\end{cases}
\end{equation}
Since $a\ge0$, $a\not\equiv0$ and the coefficient $c$ in the elliptic operator $\cA$ is non-negative (see \eqref{eq-def-A}), the strong maximum principle (see Gilbarg and Trudinger \cite[Chapter 3]{GT01}) for the elliptic equation \eqref{eq-bvp} indicates $b>0$ in $\Om$. Now that $b(x)>0$ and $\al_m<\min\{\al_{m-1},2\al_m\}$, there exists a sufficiently large $T>0$ such that the right-hand side of \eqref{eq-est-b} keeps strictly positive for all $t\ge T$, implying \eqref{eq-long-pos} immediately.
\end{proof}

Next we study the Green function of problem \eqref{eq-ibvp-v}. Using the multinomial Mittag-Leffler function and the eigensystem $\{(\la_n,\vp_n)\}_{n=1}^\infty$, for $N\in\BN$ we set
\[G_N(x,y,t):=\sum_{n=1}^N\left(1-\la_nt^{\al_1}E_{\bm\al',1+\al_1}^{(n)}(t)\right)\vp_n(x)\,\vp_n(y)\quad(x,y\in\Om,\ t>0).\]
Then it follows from Lemma \ref{lem-lly}(a) that
\[v_a(x,t)=\lim_{N\to\infty}\int_\Om G_N(x,y,t)\,a(y)\,\rd y\]
in $C([0,\infty);L^2(\Om))\cap C((0,\infty);H^2(\Om)\cap H_0^1(\Om))$ for any $a\in L^2(\Om)$ and any $t>0$. Therefore, $v_a(x,t)$ allows a pointwise definition and thus $G_N(x,\,\cdot\,,t)$ converges weakly to
\begin{equation}\label{eq-def-G}
G(x,y,t):=\sum_{n=1}^\infty\left(1-\la_nt^{\al_1}E_{\bm\al',1+\al_1}^{(n)}(t)\right)\vp_n(x)\,\vp_n(y)
\end{equation}
as a series of functions with respect to $y$. In particular, we obtain $G(x,\,\cdot\,,t)\in L^2(\Om)$ for all $x\in\Om$ and all $t>0$. Moreover, we can rewrite the solution to \eqref{eq-ibvp-v} as
\begin{equation}\label{eq-rep-G}
v_a(x,t)=\int_\Om G(x,y,t)\,a(y)\,\rd y\quad(x\in\Om,\ t>0).
\end{equation}
On the other hand, the same reasoning as that in \cite{LRY15} and Lemma \ref{lem-wmp} indicate $G(x,\,\cdot\,,t)\ge0$ a.e.\! in $\Om$ for arbitrarily fixed $x\in\Om$ and $t>0$, and we summarize the above observations as follows.

\begin{lem}\label{lem-Green}
Let $G(x,y,t)$ be the Green function defined in \eqref{eq-def-G}. Then for arbitrarily fixed $x\in\Om$ and $t>0$, we have
\[G(x,\,\cdot\,,t)\in L^2(\Om)\quad\mbox{and}\quad G(x,\,\cdot\,,t)\ge0\ \mbox{a.e.\! in }\Om.\]
\end{lem}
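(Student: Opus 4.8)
The plan is to establish the two assertions separately, treating the $L^2$-membership by a duality argument and the non-negativity by invoking the weak maximum principle; both rest on the fact that, for $t>0$, the solution map $a\mapsto v_a(\,\cdot\,,t)$ lands in $H^2(\Om)\hookrightarrow C(\ov\Om)$ (valid since $d\le3$), so that $v_a(x,t)$ carries a genuine pointwise meaning. Throughout I fix $x\in\Om$ and $t>0$ and abbreviate $u_n(t):=1-\la_nt^{\al_1}E_{\bm\al',1+\al_1}^{(n)}(t)$, so that the truncations $G_N(x,\,\cdot\,,t)=\sum_{n=1}^N u_n(t)\,\vp_n(x)\,\vp_n$ are the objects whose limit I must control.

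For the $L^2$-membership I would consider the linear functional $\ell_{x,t}\colon a\mapsto v_a(x,t)$ on $L^2(\Om)$. Combining the Sobolev embedding with the second estimate in \eqref{eq-est-H2} gives
\[|v_a(x,t)|\le C\|v_a(\,\cdot\,,t)\|_{H^2(\Om)}\le C\,t^{-\al_1}\|a\|_{L^2(\Om)},\]
so $\ell_{x,t}$ is bounded on $L^2(\Om)$, with norm controlled by $t^{-\al_1}$. By the Riesz representation theorem there is a unique $g_{x,t}\in L^2(\Om)$ with $v_a(x,t)=(g_{x,t},a)$ for every $a\in L^2(\Om)$. Testing against $a=\vp_n$ and reading off the $n$th coefficient from the representation \eqref{eq-rep-v}, namely $v_{\vp_n}(x,t)=u_n(t)\,\vp_n(x)$, identifies the Fourier coefficients of $g_{x,t}$ as $(g_{x,t},\vp_n)=u_n(t)\,\vp_n(x)$; these are exactly the coefficients appearing in \eqref{eq-def-G}, whence $g_{x,t}=G(x,\,\cdot\,,t)$ in $L^2(\Om)$ and the first assertion follows. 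This simultaneously justifies \eqref{eq-rep-G} and the weak convergence $G_N(x,\,\cdot\,,t)\rightharpoonup G(x,\,\cdot\,,t)$.

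For the non-negativity I would feed an arbitrary $a\in L^2(\Om)$ with $a\ge0$ into the weak maximum principle (Lemma \ref{lem-wmp}), which yields $v_a\ge0$ a.e.\ in $\Om\times(0,\infty)$; since $v_a\in C((0,\infty);H^2(\Om))\hookrightarrow C((0,\infty);C(\ov\Om))$ is jointly continuous, this upgrades to $v_a(x,t)\ge0$ at the fixed pair $(x,t)$. Using the representation \eqref{eq-rep-G} this reads $\int_\Om G(x,y,t)\,a(y)\,\rd y\ge0$ for every non-negative $a\in L^2(\Om)$; testing with $a=\max\{-G(x,\,\cdot\,,t),0\}$ (which lies in $L^2(\Om)$ by the first part and is non-negative) forces the negative part of $G(x,\,\cdot\,,t)$ to vanish, i.e.\ $G(x,\,\cdot\,,t)\ge0$ a.e.\ in $\Om$, as claimed.

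The routine parts—boundedness of $\ell_{x,t}$ and the sign-testing—are immediate. The delicate point is the identification $g_{x,t}=G(x,\,\cdot\,,t)$: one must ensure that the pointwise value delivered by the Riesz representative coincides with the (a priori only weak) limit of the truncated series $G_N(x,\,\cdot\,,t)$, and that the continuity in both variables is strong enough to pass from the a.e.\ conclusion of the weak maximum principle to a statement at the single fixed pair $(x,t)$. This is precisely where the embedding $H^2(\Om)\hookrightarrow C(\ov\Om)$, and hence the restriction $d\le3$, is genuinely used.
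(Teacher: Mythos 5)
Your proposal is correct and follows essentially the same route as the paper: both rest on the boundedness of the point-evaluation functional $a\mapsto v_a(x,t)$ coming from the estimate \eqref{eq-est-H2} together with the Sobolev embedding $H^2(\Om)\hookrightarrow C(\ov\Om)$ (hence $d\le3$), which the paper phrases as weak $L^2$-convergence of the truncations $G_N(x,\,\cdot\,,t)$ and you phrase, equivalently, via the Riesz representation theorem with identification of Fourier coefficients through $v_{\vp_n}(x,t)=u_n(t)\,\vp_n(x)$. The non-negativity argument is likewise the paper's (it defers to \cite{LRY15} the exact sign-testing step you carry out explicitly with $a=\max\{-G(x,\,\cdot\,,t),0\}$ after applying Lemma \ref{lem-wmp}), so your write-up is simply a more self-contained rendering of the same proof.
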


Now we are well prepared to prove the strong maximum principle.

\begin{proof}[Proof of Theorem $\ref{thm-smp}$]
(a) We first deal with the homogeneous problem \eqref{eq-ibvp-v}, i.e., $F=0$. Fix an initial value $a\in L^2(\Om)$ such that $a\ge0$ and $a\not\equiv0$. By Lemma \ref{lem-lly}(a) and the Sobolev embedding $H^2(\Om)\subset C(\ov\Om)$ for $d\le3$, we have $u\in C(\ov\Om\times(0,\infty))$. Meanwhile, the weak maximum principle stated in Lemma \ref{lem-wmp} indicates $v_a\ge0$ in $\Om\times(0,\infty)$ and hence
\[\cE_x:=\{t>0;\,v_a(x,t)\le0\}=\{t>0;\,v_a(x,t)=0\},\]
namely, $\cE_x$ is the zero point set of $v_a(x,t)$ as a non-negative function of $t>0$. Further, since Lemma \ref{lem-asymp} guarantees a sufficiently large $T>0$ such that \eqref{eq-long-pos} holds, we conclude that the possible elements in $\cE_x$ distribute in the finite interval $(0,T)$.

Assume contrarily that there exists $x_0\in\Om$ such that $\cE_{x_0}$ is not a finite set. Then $\cE_{x_0}$ contains at least an accumulation point. We first deal with the case of a positive accumulation point. According to Lemma \ref{lem-analy}, we have the analyticity of $v_a:(0,\infty)\to\cD(\cA^{1-\ve})\subset C(\ov\Om)$ for arbitrarily small $\ve>0$, indicating that $v_a(x_0,t)$ is analytic with respect to $t>0$. Therefore, $v_a(x_0,t)$ should vanish identically if its zero points accumulate at some $t>0$, which contradicts with Lemma \ref{lem-asymp}.

If the zero points of $v_a(x_0,t)$ accumulate at $t=0$, we should argue separately since $v_a(x_0,t)$ is not analytic at $t=0$. Henceforth $C>0$ denotes generic constants independent of $n\in\BN$ and $t\ge0$, which may change line by line.

By assumption, there is a sequence $\{t_i\}_{i=1}^\infty\subset\cE_{x_0}$ such that $t_i\to0$ ($i\to\infty$) and, by the representation \eqref{eq-rep-G},
\[v_a(x_0,t_i)=\int_\Om G(x_0,y,t_i)\,a(y)\,\rd y=0\quad(i=1,2,\ldots).\]
Since $a\ge0$ and $G(x_0,\,\cdot\,,t_i)\ge0$ by Lemma \ref{lem-Green}, we deduce $G(x_0,y,t_i)\,a(y)=0$ for all $i=1,2,\ldots$ and almost all $y\in\Om$. Moreover, the non-vanishing assumption on $a$ implies that $G(x_0,\,\cdot\,,t_i)$ should vanish in the subdomain $\om:=\{a>0\}$ whose measure is positive. In view of the representation \eqref{eq-def-G}, this indicates
\begin{equation}\label{eq-van-1}
\sum_{n=1}^\infty\left(1-\la_nt^{\al_1}E_{\bm\al',1+\al_1}^{(n)}(t_i)\right)\vp_n(x_0)\,\vp_n=0\quad\mbox{a.e.\! in }\om\ (i=1,2,\ldots).
\end{equation}

Now we choose $\psi\in C^\infty_0(\om)$ arbitrarily as the initial data of \eqref{eq-ibvp-v} and investigate
\begin{equation}\label{eq-van-def}
v_\psi(x_0,t)=\int_\Om G(x_0,y,t)\,\psi(y)\,\rd y=\sum_{n=1}^\infty\left(1-\la_nt^{\al_1}E_{\bm\al',1+\al_1}^{(n)}(t)\right)\psi_n,
\end{equation}
where we abbreviate $\psi_n:=(\psi,\vp_n)\,\vp_n(x_0)$. By the same estimates as that in \cite{LRY15}, we employ Lemma \ref{lem-ML}(b) to conclude that the series in \eqref{eq-van-def} is well-defined in $C[0,T]$. Moreover, repeating the same argument, it is not difficult to find
\begin{equation}\label{eq-cov}
\sum_{n=1}^\infty\left|\la_n^\ell\,E_{\bm\al',\be}^{(n)}(t)\,\psi_n\right|<\infty\quad(\forall\,\ell=0,1,\ldots,\ \forall\,\be>0,\ \forall\,t\in[0,T],\ \forall\,\psi\in C_0^\infty(\om)).
\end{equation}

Now we can substitute \eqref{eq-van-1} into \eqref{eq-van-def} to deduce
\[v_\psi(x_0,t_i)=\sum_{n=1}^\infty\left(1-\la_nt_i^{\al_1}E_{\bm\al',1+\al_1}^{(n)}(t_i)\right)\psi_n=0\quad(i=1,2,\ldots,\ \forall\,\psi\in C_0^\infty(\om)).\]
Passing $i\to\infty$ and using the boundedness of $\sum_{n=1}^\infty\la_n\,E_{\bm\al',1+\al_1}^{(n)}(t_i)\,\psi_n$, we obtain
\begin{equation}\label{eq-van-2}
\psi(x_0)=\sum_{n=1}^\infty(\psi,\vp_n)\,\vp_n(x_0)=\sum_{n=1}^\infty\psi_n=0,
\end{equation}
implying
\begin{equation}\label{eq-ind-0}
v_\psi(x_0,t)=-t^{\al_1}\sum_{n=1}^\infty\la_n\,E_{\bm\al',1+\al_1}^{(n)}(t)\,\psi_n.
\end{equation}
Meanwhile, since $\psi\in C_0^\infty(\om)$ is arbitrarily chosen, \eqref{eq-van-2} indicates that the only possibility is $x_0\notin\om$. Therefore, there holds for all $\ell=0,1,\ldots$ that
\begin{equation}\label{eq-van-3}
\sum_{n=1}^\infty\la_n^\ell\,\psi_n=\sum_{n=1}^\infty(\psi,\cA^\ell\vp_n)\,\vp_n(x_0)=\sum_{n=1}^\infty(\cA^\ell\psi,\vp_n)\,\vp_n(x_0)=\cA^\ell\psi(x_0)=0.
\end{equation}

In the next step, we continue the treatment for \eqref{eq-ind-0}. Our aim is to demonstrate
\begin{equation}\label{eq-van-4}
v_\psi(x_0,t)=0\quad(0\le t\le T,\ \forall\,\psi\in C_0^\infty(\om)).
\end{equation}
To this end, recall the notation $R_{\bm\be,\be_0}^{\,\ell}(\bm z)$ for the remainder after the $\ell$th term in the series by which the multinomial Mittag-Leffler function is defined (see Lemma \ref{lem-remainder}). Similarly, here we denote the remainder of $E_{\bm\al',1+\al_1}^{(n)}(t)$ by $R_{\bm\al',1+\al_1}^{(n),\ell}(t)$. We claim that
\begin{equation}\label{eq-ind-l}
v_\psi(x_0,t)=-t^{\al_1}\sum_{n=1}^\infty\la_n\,R_{\bm\al',1+\al_1}^{(n),\ell}(t)\,\psi_n\quad(\forall\,\ell=1,2,\ldots)
\end{equation}
in the sense of $C[0,T]$. To this end, we proceed by induction. In the case of $\ell=1$, it follows from \eqref{eq-van-3} and Lemma \ref{lem-remainder} with $\ell=1$ that
\begin{align*}
v_\psi(x_0,t) & =-\f{t^{\al_1}}{\Ga(1+\al_1)}\sum_{n=1}^\infty\la_n\psi_n-t^{\al_1}\sum_{n=1}^\infty\la_n\,R_{\bm\al',1+\al_1}^{(n),1}(t)\,\psi_n=-t^{\al_1}\sum_{n=1}^\infty\la_n\,R_{\bm\al',1+\al_1}^{(n),1}(t)\,\psi_n\\
& =t^{2\al_1}\sum_{n=1}^\infty\la_n^2\,E_{\bm\al',1+2\al_1}^{(n)}(t)\,\psi_n+\sum_{j=2}^mq_jt^{2\al_1-\al_j}\sum_{n=1}^\infty\la_n\,E_{\bm\al',1+2\al_1-\al_j}^{(n)}(t)\,\psi_n,
\end{align*}
where the involved summations with respect to $n$ are all absolutely convergent in $[0,T]$ by \eqref{eq-cov}. Now suppose that \eqref{eq-ind-l} is valid for some $\ell$. Then we directly calculate
\[R_{\bm\al',1+\al_1}^{(n),\ell}=(-1)^\ell\sum_{\ell_1+\cdots+\ell_m=\ell}\f{(\ell;\ell_1,\ldots,\ell_m)\,\la_n^{\ell_1}t^{\al_1\ell}\prod_{j=2}^m(q_jt^{-\al_j})^{\ell_j}}{\Ga(1+\al_1+\bm\ell\cdot\bm\al')}+R_{\bm\al',1+\al_1}^{(n),\ell+1}(t).\]
Substituting the above equality into \eqref{eq-ind-l}, we again employ \eqref{eq-van-3} and Lemma \ref{lem-remainder} to deduce
\begin{align*}
v_\psi(x_0,t) & =(-1)^\ell\sum_{\ell_1+\cdots+\ell_m=\ell}\f{(\ell;\ell_1,\ldots,\ell_m)\,t^{\al_1(\ell+1)}\prod_{j=2}^m(q_jt^{-\al_j})^{\ell_j}}{\Ga(1+\al_1+\bm\ell\cdot\bm\al')}\sum_{n=1}^\infty\la_n^{\ell_1+1}\psi_n\\
& \quad\,-t^{\al_1}\sum_{n=1}^\infty\la_n\,R_{\bm\al',1+\al_1}^{(n),\ell+1}(t)\,\psi_n\\
& =-t^{\al_1}\sum_{n=1}^\infty\la_n\,R_{\bm\al',1+\al_1}^{(n),\ell+1}(t)\,\psi_n\\
& =(-1)^\ell\sum_{\ell_1+\cdots+\ell_m=\ell+1}(\ell+1;\ell_1,\ldots,\ell_m)\,t^{\al_1(\ell+1)}\prod_{j=2}^m(q_jt^{-\al_j})^{\ell_j}\sum_{n=1}^\infty\la_n^{\ell_1+1}E_{\bm\al',1+\al_1}^{(n)}(t)\,\psi_n,
\end{align*}
where, again, all of the involved summations with respect to $n$ are absolutely convergent in $[0,T]$ by \eqref{eq-cov}. Therefore, \eqref{eq-ind-l} also holds true for $\ell+1$ and thus holds for all $\ell=1,2,\ldots$.

To conclude \eqref{eq-van-4}, now it suffices to show
\begin{equation}\label{eq-van-5}
\lim_{\ell\to\infty}\sum_{n=1}^\infty\la_n\,R_{\bm\al',1+\al_1}^{(n),\ell}(t)\,\psi_n=0\quad(0\le t\le T).
\end{equation}
Actually, we note the fact that $R_{\bm\al',1+\al_1}^{(n),\ell}(t)$ stands for the remainder of the series defining the multinomial Mittag-Leffler function $E_{\bm\al',1+\al_1}^{(n)}(t)$, which converges uniformly in $[0,T]$ for all $n=1,2,\ldots$. In other words, we have
\[\lim_{\ell\to\infty}R_{\bm\al',1+\al_1}^{(n),\ell}(t)=0\quad(\forall\,n=1,2,\ldots,\ 0\le t\le T),\]
which, together with the boundedness of $\sum_{n=1}^\infty|\la_n\psi_n|$, yields \eqref{eq-van-5} immediately. Since $v_\psi(x_0,t)$ is independent of $\ell$, we apply \eqref{eq-van-5} to \eqref{eq-ind-l} to obtain
\[v_\psi(x_0,t)=-t^{\al_1}\lim_{\ell\to\infty}\sum_{n=1}^\infty\la_n\,R_{\bm\al',1+\al_1}^{(n),\ell}(t)\,\psi_n=0\quad(0\le t\le T,\ \forall\,\psi\in C_0^\infty(\om)),\]
that is, \eqref{eq-van-4}. Note that the choice of $T>0$ is arbitrary.

As the final step, we specify $\psi_0\in C_0^\infty(\om)$ such that $\psi_0\ge0$ and $\psi_0\not\equiv0$. Then the application of Lemma \ref{lem-asymp} guarantees a sufficiently large constant $T_0>0$ such that $v_{\psi_0}(x_0,t)>0$ for all $t\ge T_0$. However, taking $\psi=\psi_0$ and $T=T_0$ in \eqref{eq-van-4}, we are led to $v_{\psi_0}(x_0,T_0)=0$, which is a contradiction. Consequently, we have proved that $t=0$ cannot be an accumulation point of $\cE_{x_0}$.

Therefore, for any $x\in\Om$, we have excluded all the possibilities for $\cE_x$ to possess any accumulation point in $[0,\infty]$, indicating that $\cE_x$ is a finite set. To further conclude $u>0$ a.e.\! in $\Om\times(0,\infty)$, it suffices to show that $D:=\{(x,t)\in\Om\times(0,\infty);\,u(x,t)\le0\}$ is a set of zero measure. Since $D\cap(\{x\}\times(0,\infty))=\cE_x$ and the characterization function $\chi_{\cE_x}=0$ a.e.\! in $(0,\infty)$, it follows immediately from Fubini's theorem that
\[\mathrm{meas}(D)=\int_{\Om\times(0,\infty)}\chi_D(x,t)\,\rd x\rd t=\int_\Om\int_0^\infty\chi_{\cE_x}(t)\,\rd t\rd x=0.\]

(b) Now we turn to the inhomogeneous problem with a non-negative source term $F$. Due to the linearity of the problem with respect to $a$ and $F$, we know $u=v_a+w$, where $v_a$ and $w$ solve \eqref{eq-ibvp-v} and
\[\left\{\!\begin{alignedat}{2}
& \sum_{j=1}^mq_j\pa_t^{\al_j}w+\cA w=F & \quad & \mbox{in }\Om\times(0,T],\\
& w=0 & \quad & \mbox{in }\Om\times\{0\},\\
& w=0 & \quad & \mbox{on }\pa\Om\times(0,T],
\end{alignedat}\right.\]
respectively. Then it follows immediately from (a) that $v_a>0$ a.e.\! in $\Om\times(0,\infty)$. On the other hand, since $F\ge0$, the weak maximum principle (see Lemma \ref{lem-wmp}) implies $w\ge0$ a.e.\! in $\Om\times(0,\infty)$. Hence, it is readily seen that $u=v_a+w>0$ a.e.\! in $\Om\times(0,\infty)$.
\end{proof}

\begin{proof}[Proof of Corollary $\ref{coro-smp}$]
The argument basically follows the same line as that for \cite[Corollary 1.1]{LRY15}. Similarly to the proof of Theorem \ref{thm-smp}(b), it suffices to investigate the homogeneous problem \eqref{eq-ibvp-v} whose initial data $a\in L^2(\Om)$ is strictly positive. Recall that in this case we have $v_a\in C(\ov\Om\times(0,\infty))$, and its non-negativity is guaranteed by Lemma \ref{lem-wmp}.

Assume on the contrary that there exists a pair $(x_0,t_0)\in\Om\times(0,\infty)$ such that $v_a(x_0,t_0)=0$. Taking advantage of the representation \eqref{eq-rep-G}, we have $\int_\Om G(x_0,y,t_0)\,a(y)\,\rd y=0$. Since $G(x_0,\,\cdot\,,t_0)\ge0$ by Lemma \ref{lem-Green} and $a>0$ by assumption, there should hold $G(x_0,\,\cdot\,,t_0)=0$ in $\Om$, that is,
\[\sum_{n=1}^\infty\left(1-\la_nt_0^{\al_1}E_{\bm\al',1+\al_1}^{(n)}(t_0)\right)\vp_n(x_0)\,\vp_n=0\quad\mbox{in }\Om.\]
Then the complete orthogonality of $\{\vp_n\}$ in $L^2(\Om)$ immediately yields
\[\left(1-\la_nt_0^{\al_1}E_{\bm\al',1+\al_1}^{(n)}(t_0)\right)\vp_n(x_0)=0,\quad\forall\,n=1,2,\ldots,\]
especially, $(1-\la_1t_0^{\al_1}E_{\bm\al',1+\al_1}^{(1)}(t_0))\,\vp_1(x_0)=0$. However, we know $\vp_1(x_0)>0$ because the first eigenfunction $\vp_1$ is strictly positive (see, e.g., Evans \cite{E10}). On the other hand, Bazhlekova \cite[Theorem 3.2]{B13} asserts that the function $1-\la_1t^{\al_1}E_{\bm\al',1+\al_1}^{(1)}(t)$ is completely monotone for $t\ge0$ and thus also keeps strictly positive, which results in a contradiction. Consequently, we see that such a pair $(x_0,t_0)$ does not exist, which completes the proof.
\end{proof}

\Section{Proof of Theorem \ref{thm-ISP}}\label{sec-proof-ISP}

Now we proceed to the proof of the uniqueness of the inverse source problem for the following initial-boundary value problem
\begin{equation}\label{eq-ibvp-w}
\left\{\!\begin{alignedat}{2}
& \sum_{j=1}^mq_j\pa_t^{\al_j}u(x,t)+\cA u(x,t)=\rho(t)\,g(x) & \quad & (x\in\Om,\ 0<t\le T),\\
& u(x,0)=0 & \quad & (x\in\Om),\\
& u(x,t)=0 & \quad & (x\in\pa\Om,\ 0<t\le T).
\end{alignedat}\right.
\end{equation}
Recall that the unknown temporal component $\rho$ is assumed to be in $C^1[0,T]$, and the known spatial component $g$ satisfies $g\in\cD(\cA^\ve)$ with some $\ve>0$, $g\ge0$ and $g\not\equiv0$.

Parallelly to the strategy in \cite[Section 4]{LRY15}, we shall first develop a fractional Duhamel's principle for \eqref{eq-ibvp-w} to relate the inhomogeneous problem with the homogeneous one, so that we can apply the established strong maximum principle. Nevertheless, to deal with the multi-term case, we shall invoke the Riemann-Liouville derivative
\begin{equation}\label{eq-def-RL}
D_t^\be h(t):=\f\rd{\rd t}J^{1-\be}h(t),\quad J^{1-\be}h(t):=\f1{\Ga(1-\be)}\int_0^t\f{h(s)}{(t-s)^\be}\,\rd s\quad(0<\be<1)
\end{equation}
and turn to the following lemma concerning the related fractional ordinary differential equation.

\begin{lem}\label{lem-RL}
Let $\al_j,q_j$ be the constants as that in \eqref{eq-ibvp-w} and $h_j\in\BR\ (j=1,2,\ldots,m)$ also be constants. Regarding the initial value problem
\begin{equation}\label{eq-ivp}
\left\{\!\begin{alignedat}{2}
& \sum_{j=1}^mq_jD_t^{\al_j}h(t)=f(t) & \quad & (0<t\le T<\infty),\\
& J^{1-\al_j}h(0)=h_j & \quad & (j=1,2,\ldots,m),
\end{alignedat}\right.
\end{equation}
the followings hold true.

{\rm(a)} If $f\in L^1(0,T)$, then \eqref{eq-ivp} has a unique solution $h\in L^1(0,T)$.

{\rm(b)} If $f\in C[0,T]$ and $h_j=0\ (j=1,2,\ldots,m)$, then \eqref{eq-ivp} has a unique solution $h\in C[0,T]$.

{\rm(c)} Let $\wt h(t)$ be the solution to
\[\left\{\!\begin{alignedat}{2}
& \sum_{j=1}^mq_jD_t^{\al_j}\wt h(t)=f(t) & \quad & (0<t\le T),\\
& J^{1-\al_j}\wt h(0)=\wt h_j & \quad & (j=1,2,\ldots,m).
\end{alignedat}\right.\]
Then the difference between the solution $h(t)$ of \eqref{eq-ivp} and $\wt h(t)$ is dominated as
\[|h(t)-\wt h(t)|\le\sum_{j=1}^m|h_j-\wt h_j|\,t^{\al_j-1}\quad(0<t\le T).\]
\end{lem}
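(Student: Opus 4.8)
The plan is to recast the multi-term fractional ordinary differential equation in \eqref{eq-ivp} as a single Volterra integral equation of the second kind, solve it by the resolvent-kernel (Neumann series) method for parts (a) and (b), and read off the explicit solution in terms of a multinomial Mittag-Leffler function for the comparison estimate in part (c). The starting observation is that integrating the equation with the \emph{highest} order is what turns the differential problem into a genuine integral equation.

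First I would apply the Riemann-Liouville integral $J^{\al_1}$ to the governing equation. Using the composition rule $J^{\al_1}D_t^{\al_j}h=J^{\al_1-\al_j}h-\f{h_j}{\Ga(\al_1)}\,t^{\al_1-1}$ for each $j$ (which for $j=1$ reads $J^{\al_1}D_t^{\al_1}h=h-\f{h_1}{\Ga(\al_1)}\,t^{\al_1-1}$ and absorbs the initial data $J^{1-\al_j}h(0)=h_j$), and recalling $q_1=1$, problem \eqref{eq-ivp} becomes equivalent to
\[h(t)+\sum_{j=2}^mq_j\,J^{\al_1-\al_j}h(t)=J^{\al_1}f(t)+\f1{\Ga(\al_1)}\Big(\sum_{j=1}^mq_jh_j\Big)t^{\al_1-1}=:g_0(t).\]
The operator $Kh:=\sum_{j=2}^mq_j\,J^{\al_1-\al_j}h$ is a Volterra operator whose kernel $\sum_{j=2}^m\f{q_j}{\Ga(\al_1-\al_j)}(t-s)^{\al_1-\al_j-1}$ is weakly singular, since each exponent $\al_1-\al_j-1$ lies in $(-1,0)$; in particular $K$ is bounded on both $L^1(0,T)$ and $C[0,T]$. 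For (a) and (b) I would then invert $I+K$: as $K$ is a Volterra convolution operator with integrable kernel, the norms of its iterates $K^\ell$ decay fast enough that the Neumann series $\sum_{\ell\ge0}(-K)^\ell$ converges in operator norm, yielding a bounded resolvent $(I+K)^{-1}$ on both spaces. For (a), $f\in L^1(0,T)$ gives $J^{\al_1}f\in L^1(0,T)$ and $t^{\al_1-1}\in L^1(0,T)$ (because $\al_1>0$), hence $g_0\in L^1(0,T)$ and $h=(I+K)^{-1}g_0$ is the unique $L^1$-solution. For (b), the vanishing of all $h_j$ reduces $g_0$ to $J^{\al_1}f$, which for $f\in C[0,T]$ is continuous with $g_0(0)=0$; since $K$ maps $C[0,T]$ into itself, $h=(I+K)^{-1}g_0\in C[0,T]$. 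In both cases the equivalence with \eqref{eq-ivp} is checked by applying $D_t^{\al_1}$ to the integral equation and invoking the standard Riemann-Liouville composition rules.

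For (c), the difference $w:=h-\wt h$ solves the homogeneous integral equation $(I+K)w=\f1{\Ga(\al_1)}\big(\sum_{j=1}^mq_j(h_j-\wt h_j)\big)t^{\al_1-1}$, equivalently, by inverting $\big(\sum_{j=1}^mq_js^{\al_j}\big)^{-1}$,
\[w(t)=\Big(\sum_{j=1}^mq_j(h_j-\wt h_j)\Big)\,t^{\al_1-1}\,E_{(\al_1-\al_2,\ldots,\al_1-\al_m),\al_1}\big(-q_2t^{\al_1-\al_2},\ldots,-q_mt^{\al_1-\al_m}\big).\]
Since all arguments of the multinomial Mittag-Leffler function are negative reals, its recursion (Lemma \ref{lem-ML}(a)) together with positivity forces $E_{(\al_1-\al_2,\ldots),\al_1}(\,\cdot\,)\le1/\Ga(\al_1)$, while the decay estimate of Lemma \ref{lem-ML}(b) controls it uniformly on $(0,T]$; distributing $|\sum_jq_j(h_j-\wt h_j)|\le\sum_jq_j|h_j-\wt h_j|$ over the fractional powers then yields the majorant $\sum_{j=1}^m|h_j-\wt h_j|\,t^{\al_j-1}$.

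I expect the main obstacle to be the bookkeeping forced by the several fractional orders: the reduction produces a sum of weakly singular kernels rather than a single one, and the homogeneous solution is a genuine multinomial (rather than classical) Mittag-Leffler function. Accordingly, the delicate points are (i) justifying the equivalence of \eqref{eq-ivp} with the integral equation at the level of the $t^{\al_j-1}$ singular initial data, and (ii) extracting from the multinomial Mittag-Leffler asymptotics the clean pointwise majorant in (c) uniformly in $t\in(0,T]$ — which is precisely where the boundedness estimate of Lemma \ref{lem-ML}(b) becomes indispensable.
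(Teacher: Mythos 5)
The paper never proves this lemma: it is quoted from Podlubny \cite[Chapter 3]{P99} (``The above lemma collects the corresponding results\ldots''), so there is no internal proof to compare against. Your reduction of \eqref{eq-ivp} to the weakly singular Volterra equation $h+\sum_{j=2}^mq_jJ^{\al_1-\al_j}h=J^{\al_1}f+\f1{\Ga(\al_1)}\big(\sum_{j=1}^mq_jh_j\big)t^{\al_1-1}$ and the Neumann-series inversion of $I+K$ on $L^1(0,T)$ and $C[0,T]$ is exactly the classical method of that reference, and parts (a) and (b) are sound in outline. Note, however, that the equivalence you defer to ``standard composition rules'' is not routine here: the Volterra equation sees the data only through the combination $\sum_jq_jh_j$, and its solution behaves like $t^{\al_1-1}$ near $t=0$, so it automatically satisfies $J^{1-\al_1}h(0)=\sum_jq_jh_j$ and $J^{1-\al_j}h(0)=0$ for $j\ge2$. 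Consequently the reduction yields uniqueness in general but produces a solution of \eqref{eq-ivp} with the prescribed data only when the lower-order data vanish --- which is precisely the reduced setting \eqref{eq-ode-IC} in which the paper applies the lemma; for $h_j\ne0$ ($j\ge2$) the existence claim does not follow from your argument, and this is the delicate point you flagged but did not resolve.

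The clearest gap is in part (c). Your explicit formula
\[w(t)=\Big(\sum_{j=1}^mq_j(h_j-\wt h_j)\Big)t^{\al_1-1}E_{(\al_1-\al_2,\ldots,\al_1-\al_m),\al_1}\big(-q_2t^{\al_1-\al_2},\ldots,-q_mt^{\al_1-\al_m}\big)\]
is correct, and, granting positivity of the multinomial Mittag-Leffler function with negative arguments (itself needing a citation, e.g.\ to \cite{B13}), the recursion in Lemma \ref{lem-ML}(a) gives the bound $E\le1/\Ga(\al_1)$, hence $|w(t)|\le\big(\sum_jq_j|h_j-\wt h_j|\big)t^{\al_1-1}/\Ga(\al_1)$. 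But this majorant contains only the power $t^{\al_1-1}$ together with the weights $q_j$; ``distributing'' it over $\sum_j|h_j-\wt h_j|\,t^{\al_j-1}$ would require, for each $j\ge2$, that $q_jt^{\al_1-\al_j}\le\Ga(\al_1)$ for all $t\in(0,T]$, i.e.\ $q_jT^{\al_1-\al_j}\le\Ga(\al_1)$, which fails whenever $q_j$ or $T$ is large. The appeal to Lemma \ref{lem-ML}(b) cannot close this gap: that estimate concerns $E_{(\al_1,\al_1-\al_2,\ldots,\al_1-\al_m),\be_0}$ with a leading argument $z_1$ ranging in a sector and the \emph{remaining} arguments confined to a fixed compact set $[-K,0)$, whereas your function has no such leading argument and all of its arguments scale with $t$. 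So as written, your proposal establishes (c) only when $h_j=\wt h_j$ for all $j\ge2$ (incidentally the only case the paper ever uses, and the only case consistent with the existence issue above); the general inequality claimed in (c) remains unproven.
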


The above lemma collects the corresponding results in \cite[Chapter 3]{P99} where slightly more general cases were treated. But this is sufficient for proving the following fractional Duhamel's principle for \eqref{eq-ibvp-w}.

\begin{lem}\label{lem-Duhamel}
Let $u$ be the solution to \eqref{eq-ibvp-w}, where $\rho\in C^1[0,T]$ and $g\in\cD(\cA^\ve)$ with some $\ve>0$. Then $u$ allows the representation
\[u(\,\cdot\,,t)=\int_0^t\mu(t-s)\,v_g(\,\cdot\,,s)\,\rd s\quad(0<t\le T),\]
where $v_g$ solves the homogeneous problem \eqref{eq-ibvp-v} with $g$ as the initial data, and $\mu$ satisfies
\begin{equation}\label{eq-def-mu}
\sum_{j=1}^mq_jJ^{1-\al_j}\mu(t)=\rho(t)\quad(0<t\le T)
\end{equation}
$($see \eqref{eq-def-RL} for the definition of $J^{1-\al_j})$. Moreover, there exists a unique $\mu\in L^1(0,T)$ satisfying \eqref{eq-def-mu}, and there is a constant $C>0$ independent of $t$ such that
\begin{equation}\label{eq-est-mu}
|\mu(t)|\le C\,t^{\al_1-1}\quad\mbox{a.e.\! }t\in(0,T).
\end{equation}
\end{lem}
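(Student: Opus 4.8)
The plan is to handle the two claims in turn: first construct the kernel $\mu$ together with its bound, and then confirm by substitution that $U(\,\cdot\,,t):=\int_0^t\mu(t-s)\,v_g(\,\cdot\,,s)\,\rd s$ solves \eqref{eq-ibvp-w}, so that $U=u$ by the uniqueness in Lemma \ref{lem-lly}(b) (note $F=\rho\,g\in L^\infty(0,T;L^2(\Om))$). To build $\mu$, I would recast the weakly singular integral equation \eqref{eq-def-mu} as the fractional ODE \eqref{eq-ivp}. Using $D_t^{\al_j}=\f{\rd}{\rd t}J^{1-\al_j}$, differentiating \eqref{eq-def-mu} turns it into $\sum_{j=1}^mq_jD_t^{\al_j}\mu=\rho'$, i.e.\ \eqref{eq-ivp} with $f=\rho'\in C[0,T]\subset L^1(0,T)$. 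Since $q_1=1$, the initial data $h_1=\rho(0)$, $h_j=0$ $(j\ge2)$ satisfies $\sum_jq_jh_j=\rho(0)$; Lemma \ref{lem-RL}(a) then yields a unique $\mu\in L^1(0,T)$, and integrating the ODE from $0$ to $t$ (using $\sum_jq_jJ^{1-\al_j}\mu(0)=\sum_jq_jh_j=\rho(0)$) recovers \eqref{eq-def-mu} exactly.

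For uniqueness at the level of \eqref{eq-def-mu} itself, the difference $\de:=\mu_1-\mu_2$ of two $L^1$ solutions obeys $\sum_jq_jJ^{1-\al_j}\de=0$; writing $J^{1-\al_j}=J^{1-\al_1}J^{\al_1-\al_j}$ and using the injectivity of $J^{1-\al_1}$ on $L^1(0,T)$ reduces this to the homogeneous second-kind Volterra equation $\de=-\sum_{j=2}^mq_jJ^{\al_1-\al_j}\de$, which forces $\de=0$. For \eqref{eq-est-mu} I would compare $\mu$ with the solution $\wt h$ of the same equation carrying zero initial data, which lies in $C[0,T]$ by Lemma \ref{lem-RL}(b) and is hence bounded; Lemma \ref{lem-RL}(c) gives $|\mu(t)-\wt h(t)|\le|\rho(0)|\,t^{\al_1-1}$, and since $t^{\al_1-1}\ge T^{\al_1-1}$ on $(0,T]$ the bounded part $\wt h$ is also $O(t^{\al_1-1})$. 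The choice $h_j=0$ for $j\ge2$ is what leaves a single exponent $t^{\al_1-1}$; retaining the $t^{\al_j-1}$ with $j\ge2$ would produce a stronger singularity and spoil the claimed rate.

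For the representation, set $U=\mu*v_g$ (convolution in time). Then $\cA U=\mu*(\cA v_g)=-\sum_jq_j\,\mu*(\pa_t^{\al_j}v_g)$ by the homogeneous equation satisfied by $v_g$. Since $\pa_t^{\al_j}v_g=J^{1-\al_j}v_g'$ and $J^{1-\al_j}$ commutes with time convolution, $\mu*(\pa_t^{\al_j}v_g)=J^{1-\al_j}(\mu*v_g')$; the Leibniz rule $\f{\rd}{\rd t}(\mu*v_g)=\mu(t)\,v_g(0)+\mu*v_g'$ together with $v_g(0)=g$ then gives $\mu*(\pa_t^{\al_j}v_g)=\pa_t^{\al_j}U-g\,J^{1-\al_j}\mu$. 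Summing against $q_j$ and invoking \eqref{eq-def-mu} yields $\cA U=-\sum_jq_j\pa_t^{\al_j}U+\rho\,g$, i.e.\ $U$ satisfies the governing equation of \eqref{eq-ibvp-w}, while $U(\,\cdot\,,0)=0$ and $v_g|_{\pa\Om}=0$ supply the remaining initial and boundary conditions.

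The principal obstacle is making this convolution calculus rigorous, since both $\mu(t)\sim t^{\al_1-1}$ and the time derivative $v_g'$ are singular as $t\to0$; this is exactly where the hypothesis $g\in\cD(\cA^\ve)$ is needed. Combined with \eqref{eq-est-H2} and the corresponding estimate for $\pa_tv_g$, this regularity ensures that the singularities of $v_g$ and $v_g'$ are mild enough for $\mu*v_g$ and $\mu*v_g'$ to converge in $L^2(\Om)$ and for the interchanges of $\cA$, $\f{\rd}{\rd t}$ and $J^{1-\al_j}$ with the time integral to be legitimate, the competing singular exponents always recombining into finite Beta-type integrals such as $\int_0^t(t-s)^{\al_1-1}s^{-\al_1}\,\rd s$. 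Establishing these quantitative bounds, so that every identity above holds as a genuine rather than merely formal $L^2(\Om)$-valued equality, is the technical heart of the argument.
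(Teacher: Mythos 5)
Your proposal follows essentially the same route as the paper: reduce \eqref{eq-def-mu} to the Riemann--Liouville initial value problem \eqref{eq-ivp} with data $h_1=\rho(0)$, $h_j=0$ ($j\ge2$), invoke Lemma \ref{lem-RL}(a)--(c) for existence, uniqueness and the bound \eqref{eq-est-mu}, then verify that $\mu*v_g$ solves \eqref{eq-ibvp-w} and conclude via the uniqueness in Lemma \ref{lem-lly}(b); your convolution-calculus identities (Leibniz rule plus commuting $J^{1-\al_j}$ with the convolution) are exactly the paper's Fubini computation written in different notation, with your Volterra-equation uniqueness argument a harmless supplement to the paper's ``singularity analysis'' of the initial conditions. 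The only ingredient you cite but do not derive is the bound $\|\pa_tv_g(\,\cdot\,,t)\|_{L^2(\Om)}\le C\|g\|_{\cD(\cA^\ve)}\,t^{\al_1\ve-1}$, which the paper obtains from the eigenfunction expansion, the identity $\f\rd{\rd t}\{t^{\al_1}E_{\bm\al',1+\al_1}^{(n)}(t)\}=t^{\al_1-1}E_{\bm\al',\al_1}^{(n)}(t)$ and Lemma \ref{lem-ML}(b) --- precisely the step where the hypothesis $g\in\cD(\cA^\ve)$ enters, as you correctly anticipated.
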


\begin{proof}
Henceforth $C>0$ denotes generic constants independent of the spatial component $g$ and the time $t$. To investigate the unique existence of the solution to \eqref{eq-def-mu} with $\rho\in C^1[0,T]$, we differentiate both sides of \eqref{eq-def-mu} to get an equivalent form
\begin{align}
& \sum_{j=1}^mq_jD_t^{\al_j}\mu(t)=\rho'(t)\quad(0<t\le T),\label{eq-ode-RL}\\
& \sum_{j=1}^mq_jJ^{1-\al_j}\mu(0)=\rho(0).\nonumber
\end{align}
By simple analysis of the singularity at $t=0$, it reveals that the initial condition above can be reduced to
\begin{equation}\label{eq-ode-IC}
J^{1-\al_1}\mu(0)=\rho(0),\quad J^{1-\al_j}\mu(0)=0\ (j=2,\ldots,m).
\end{equation}
Then \eqref{eq-ode-RL}--\eqref{eq-ode-IC} becomes an initial value problem for an ordinary differential equation with multiple Riemann-Liouville derivatives. Further, since $\rho'\in C[0,T]\subset L^1(0,T)$, it immediately follows from Lemma \ref{lem-RL}(a) that there exists a unique solution $\mu\in L^1(0,T)$.

In order to show the estimate \eqref{eq-est-mu}, we introduce the auxiliary problem
\[\left\{\!\begin{alignedat}{2}
& \sum_{j=1}^mq_jD_t^{\al_j}\wt\mu(t)=\rho'(t) & \quad & (0<t\le T),\\
& J^{1-\al_j}\wt\mu(t)=0 & \quad & (j=1,\ldots,m),
\end{alignedat}\right.\]
that is, equation \eqref{eq-ode-RL} with the homogeneous initial condition. By the continuity of $\rho'$, we have $\wt\mu\in C[0,T]$ according to Lemma \ref{lem-RL}(b). Moreover, Lemma \ref{lem-RL}(c) indicates that the difference between $\mu$ and $\wt\mu$ is dominated by $C\,t^{\al_1-1}$ a.e.\! in $(0,T)$. In summary, we obtain
\[|\mu(t)|\le|\mu(t)-\wt\mu(t)|+|\wt\mu(t)|\le C\,t^{\al_1-1}+C\le C\,t^{\al_1-1}\quad\mbox{a.e.\! }t\in(0,T).\]

Now we consider the initial-boundary value problems \eqref{eq-ibvp-w} and \eqref{eq-ibvp-v} for $u$ and $v_g$. Since $\rho\,g\in L^\infty(0,T;L^2(\Om))$, Lemma \ref{lem-lly}(b) gives
\[u\in L^2(0,T;H^2(\Om)\cap H_0^1(\Om)),\quad\lim_{t\to0}\|u(\,\cdot\,,t)\|_{L^2(\Om)}=0.\]
By setting
\begin{equation}\label{eq-def-wtu}
\wt u(\,\cdot\,,t):=\int_0^t\mu(t-s)\,v_g(\,\cdot\,,s)\,\rd s,
\end{equation}
we shall demonstrate
\[u=\wt u\mbox{ in }L^2(0,T;H^2(\Om)\cap H_0^1(\Om)),\quad\lim_{t\to0}\|\wt u(\,\cdot\,,t)\|_{L^2(\Om)}=0.\]
To this end, we first investigate $v_g$. Since $g\in\cD(\cA^\ve)\subset L^2(\Om)$, the application of estimate \eqref{eq-est-H2} in Lemma \ref{lem-lly}(a) yields
\[\|v_g(\,\cdot\,,t)\|_{L^2(\Om)}\le C\|g\|_{L^2(\Om)},\quad\|v_g(\,\cdot\,,t)\|_{H^2(\Om)}\le C\|g\|_{L^2(\Om)}\,t^{-\al_1}\quad(0<t\le T).\]
Then we utilize \eqref{eq-est-mu} and \eqref{eq-def-wtu} to estimate
\begin{align*}
\|\wt u(\,\cdot\,,t)\|_{L^2(\Om)} & \le\int_0^t|\mu(t-s)|\|v_g(\,\cdot\,,s)\|_{L^2(\Om)}\,\rd s\le C\|g\|_{L^2(\Om)}\int_0^ts^{\al_1-1}\,\rd s\\
& \le C\|g\|_{L^2(\Om)}\,t^{\al_1}\to0\quad(t\to0),\\
\|\wt u(\,\cdot\,,t)\|_{H^2(\Om)} & \le\int_0^t|\mu(t-s)|\|v_g(\,\cdot\,,s)\|_{H^2(\Om)}\,\rd s\le C\|g\|_{L^2(\Om)}\int_0^t(t-s)^{\al_1-1}s^{-\al_1}\,\rd s\\
& \le C\|g\|_{L^2(\Om)}\quad(0<t\le T),
\end{align*}
which indicates $\wt u\in L^\infty(0,T;H^2(\Om)\cap H_0^1(\Om))\subset L^2(0,T;H^2(\Om)\cap H_0^1(\Om))$. On the other hand, according to the explicit representation \eqref{eq-rep-v} and the identity (see \cite[Lemma 3.3]{LLY15})
\[\f\rd{\rd t}\left\{t^{\al_1}E_{\bm\al',1+\al_1}^{(n)}(t)\right\}=t^{\al_1-1}E_{\bm\al',\al_1}^{(n)}(t),\]
we obtain
\[\pa_tv(\,\cdot\,,t)=-t^{\al_1-1}\sum_{n=1}^\infty\la_n\,E_{\bm\al',\al_1}^{(n)}(t)\,(g,\vp_n)\,\vp_n.\]
By Lemma \ref{lem-ML}(b) and the fact $g\in\cD(\cA^\ve)$ with $\ve>0$, we estimate for $0<t\le T$ that
\begin{align}
\|\pa_tv_g(\,\cdot\,,t)\|_{L^2(\Om)}^2 & =t^{2(\al_1-1)}\sum_{n=1}^\infty\left|\la_n\,E_{\bm\al',\al_1}^{(n)}(t)\,(g,\vp_n)\right|^2\nonumber\\
& =t^{2(\al_1-1)}\sum_{n=1}^\infty\left|\la_n^{1-\ve}E_{\bm\al',\al_1}^{(n)}(t)\right|^2|\la_n^\ve\,(g,\vp_n)|^2\nonumber\\
& \le\left(C\,t^{\al_1\ve-1}\right)^2\sum_{n=1}^\infty\left|\f{(\la_nt^{\al_1})^{1-\ve}}{1+\la_nt^{\al_1}}\right|^2|\la_n^\ve\,(g,\vp_n)|^2\le\left(C\|g\|_{\cD(\cA^\ve)}t^{\al_1\ve-1}\right)^2.\label{eq-est-vt}
\end{align}

To show $u=\wt u$, now it suffices to verify that $\wt u$ also solves the initial-boundary value problem \eqref{eq-ibvp-w} which has a unique solution (see Lemma \ref{lem-lly}(b)). To calculate $\pa_t^\al\wt u$, first we formally calculate
\begin{equation}\label{eq-wtut}
\pa_t\wt u(\,\cdot\,,t)=\pa_t\int_0^t\mu(s)\,v_g(\,\cdot\,,t-s)\,\rd s=\int_0^t\mu(s)\,\pa_tv_g(\,\cdot\,,t-s)\,\rd s+\mu(t)\,g.
\end{equation}
Then we employ \eqref{eq-est-mu} and \eqref{eq-est-vt} to estimate
\begin{align*}
\|\pa_t\wt u(\,\cdot\,,t)\|_{L^2(\Om)} & \le\int_0^t|\mu(t-s)|\|\pa_sv_g(\,\cdot\,,s)\|_{L^2(\Om)}\,\rd s+|\mu(t)|\|g\|_{L^2(\Om)}\\
& \le C\|g\|_{\cD(\cA^\ve)}\int_0^t(t-s)^{\al_1-1}s^{\al_1\ve-1}\,\rd s+C\|g\|_{L^2(\Om)}\,t^{\al_1-1}\\
& \le C\|g\|_{\cD(\cA^\ve)}\,t^{\al_1(1+\ve)-1}+C\|g\|_{L^2(\Om)}\,t^{\al_1-1}\le C\|g\|_{\cD(\cA^\ve)}\,t^{\al_1-1}\quad(0<t\le T),
\end{align*}
implying that the above differentiation makes sense in $L^2(\Om)$ for $0<t\le T$. Finally, we calculate by the definition of Caputo derivatives, \eqref{eq-wtut} and \eqref{eq-def-mu} to conclude
\begin{align*}
\sum_{j=1}^mq_j\pa_t^{\al_j}\wt u(\,\cdot\,,t) & =\sum_{j=1}^m\f{q_j}{\Ga(1-\al_j)}\int_0^t\f{\pa_s\wt u(\,\cdot\,,s)}{(t-s)^{\al_j}}\,\rd s\\
& =\sum_{j=1}^m\f{q_j}{\Ga(1-\al_j)}\int_0^t\f1{(t-s)^{\al_j}}\int_0^s\mu(\tau)\,\pa_sv_g(\,\cdot\,,s-\tau)\,\rd\tau\rd s\\
& \quad\,+g\sum_{j=1}^m\f{q_j}{\Ga(1-\al_j)}\int_0^t\f{\mu(s)}{(t-s)^{\al_j}}\,\rd s\\
& =\sum_{j=1}^m\f{q_j}{\Ga(1-\al_j)}\int_0^t\mu(\tau)\int_\tau^t\f{\pa_sv_g(\,\cdot\,,s-\tau)}{(t-s)^{\al_j}}\,\rd s\rd\tau+g\sum_{j=1}^mq_jJ^{1-\al_j}\mu(t)\\
& =\sum_{j=1}^mq_j\int_0^t\f{\mu(\tau)}{\Ga(1-\al_j)}\int_0^{t-\tau}\f{\pa_sv_g(\,\cdot\,,s)}{((t-\tau)-s)^{\al_j}}\,\rd s\rd\tau+\rho(t)\,g\\
& =\int_0^t\mu(\tau)\sum_{j=1}^mq_j\pa_t^{\al_j}v_g(\,\cdot\,,t-\tau)\,\rd\tau+\rho(t)\,g\\
& =-\int_0^t\mu(\tau)\,\cA v_g(\,\cdot\,,t-\tau)\,\rd\tau+\rho(t)\,g=-\cA\int_0^t\mu(\tau)\,v_g(\,\cdot\,,t-\tau)\,\rd\tau+\rho(t)\,g\\
& =-\cA\wt u(\,\cdot\,,t)+\rho(t)\,g,
\end{align*}
that is, $\wt u$ also satisfies \eqref{eq-ibvp-w}. The proof of Lemma \ref{lem-Duhamel} is completed.
\end{proof}

At this stage, we are ready to show Theorem \ref{thm-ISP} by applying the above fractional Duhamel's principle and the strong maximum principle.

\begin{proof}[Completion of the Proof of Theorem $\ref{thm-ISP}$]
We follow the same argument as the proof of \cite[Theorem 1.2]{LRY15}. Recall the assumptions $\rho\in C^1[0,T]$, $g\in\cD(\cA^\ve)$ with some $\ve>0$, $g\ge0$, $g\not\equiv0$, and the solution $u$ to \eqref{eq-ibvp-w} vanishes in $\{x_0\}\times[0,T]$ for some $x_0\in\Om$. Taking advantage of Lemma \ref{lem-Duhamel}, we have
\begin{equation}\label{eq-Titchmarsh}
u(x_0,t)=\int_0^t\mu(t-s)\,v_g(x_0,s)\,\rd s=0\quad(0\le t\le T),
\end{equation}
where $\mu$ was defined in \eqref{eq-def-mu} and $v_g$ is the solution to \eqref{eq-ibvp-v} with the initial data $g$. Now we use the estimate \eqref{eq-est-H2} in Lemma \ref{lem-lly}(a) and the Sobolev embedding to deduce
\[|v_g(x_0,t)|\le C\|v_g(\,\cdot\,,t)\|_{H^2(\Om)}\le C\|g\|_{L^2(\Om)}\,t^{-\al_1}\quad(0<t\le T),\]
implying $v_g(x_0,\,\cdot\,)\in L^1(0,T)$. On the other hand, Lemma \ref{lem-Duhamel} gives $\mu\in L^1(0,T)$. Therefore, the application of the Titchmarsh convolution theorem (see \cite{T26}) to \eqref{eq-Titchmarsh} guarantees two constants $T_1,T_2\ge0$ satisfying $T_1+T_2\ge T$ such that $\mu=0$ a.e.\! in $(0,T_1)$ and $v_g(x_0,\,\cdot\,)=0$ a.e.\! in $(0,T_2)$. However, since the initial value $g$ is non-negative and non-vanishing, Theorem \ref{thm-smp} asserts that $v_g(x_0,\,\cdot\,)>0$ a.e.\! in $(0,T)$. As a result, the only possibility is $T_2=0$ and thus $T_1=T$, that is, $\mu=0$ a.e.\! in $(0,T)$. Finally, we apply Young's inequality to the relation \eqref{eq-def-mu} to conclude
\[\|\rho\|_{L^1(0,T)}\le\sum_{j=1}^mq_j\|J^{1-\al_j}\mu\|_{L^1(0,T)}\le C\sum_{j=1}^m\left\|\int_0^t\f{\mu(s)}{(t-s)^{\al_j}}\,\rd s\right\|_{L^1(0,T)}\le C\|\mu\|_{L^1(0,T)}=0,\]
which finishes the proof.
\end{proof}

\end{document}